\documentclass[oneside,english]{amsart}

\usepackage[T1]{fontenc}
\usepackage[latin9]{inputenc}
\setlength{\parskip}{\smallskipamount}
\setlength{\parindent}{0pt}
\usepackage{babel}
\usepackage{verbatim}
\usepackage{prettyref}
\usepackage{mathtools}
\usepackage{amstext}
\usepackage{amsthm}
\usepackage{amssymb}
\usepackage{xcolor}
\usepackage{ulem}

\usepackage[unicode=true,pdfusetitle,
 bookmarks=true,bookmarksnumbered=false,bookmarksopen=false,
 breaklinks=false,pdfborder={0 0 1},backref=false,colorlinks=false]
 {hyperref}

\makeatletter
\numberwithin{equation}{section}
\numberwithin{figure}{section}
\theoremstyle{plain}
\newtheorem{thm}{\protect\theoremname}[section]
\theoremstyle{definition}
\newtheorem{defn}[thm]{\protect\definitionname}
\theoremstyle{plain}
\newtheorem{prop}[thm]{\protect\propositionname}
\theoremstyle{remark}
\newtheorem{rem}[thm]{\protect\remarkname}
\theoremstyle{plain}
\newtheorem{lem}[thm]{\protect\lemmaname}
\theoremstyle{definition}

\theoremstyle{plain}

\usepackage[T1]{fontenc}    
\def\monthenglish{\ifcase\month \or
  January\or February\or March\or April\or May\or June\or
  July\or August\or September\or October\or November\or December\fi}
\usepackage{a4wide}        
\addtolength{\headheight}{2pt}
\usepackage{amsmath}
\usepackage{euscript}
\usepackage{enumerate}
\usepackage{prettyref}
\allowdisplaybreaks[1] 
\usepackage{times,mathptm} 
\usepackage{amsfonts}
\newcommand{\hide}[1]{}

\newcommand{\N}{\mathbb{N}}
\newcommand{\R}{\mathbb{R}}

\newcommand*{\dom}{\operatorname{dom}}            




\newcommand*{\e}{\mathrm{e}}





\DeclareMathAccent{\Circ}{\mathalpha}{operators}{"17}
\newcommand{\interior}[1]{\Circ{#1}}

\renewcommand{\Re}{\operatorname{\mathfrak{Re}}}


\newcommand{\rk}{\operatorname{rank}}

\renewcommand{\tilde}{\widetilde}
\renewcommand*{\epsilon}{\varepsilon}
\renewcommand{\d}{\,\mathrm{d}}

\arraycolsep2pt


\newrefformat{prop}{Proposition \ref{#1}}
\newrefformat{lem}{Lemma \ref{#1}}
\newrefformat{thm}{Theorem \ref{#1}}
\newrefformat{cor}{Corollary \ref{#1}}
\newrefformat{rem}{Remark \ref{#1}}
\newrefformat{exa}{Example \ref{#1}}
\newrefformat{sub}{Subsection \ref{#1}}
\newrefformat{eq}{(\ref{#1})}

\makeatother

\providecommand{\corollaryname}{Corollary}
\providecommand{\definitionname}{Definition}
\providecommand{\examplename}{Example}
\providecommand{\lemmaname}{Lemma}
\providecommand{\propositionname}{Proposition}
\providecommand{\remarkname}{Remark}
\providecommand{\theoremname}{Theorem}

\begin{document}
\title{A Structural Observation on Port-Hamiltonian Systems}
\author{Rainer Picard, Sascha Trostorff, Bruce Watson \& Marcus Waurick}
\maketitle

\textbf{Abstract.} We study port-Hamiltonian systems on a familiy of intervals and characterise all boundary conditions leading to $m$-accretive realisations of the port-Hamiltonian operator and thus to generators of contractive semigroups. The proofs are based on a structural observation that the port-Hamiltonian operator can be transformed to the derivative on a familiy of reference intervals by suitable congruence relations allowing for studying the simpler case of a transport equation. Moreover, we provide well-posedness results for associated control problems without assuming any additional regularity of the operators involved.   

\section{Introduction}

In this paper, we shall revisit Port-Hamiltonian differential equations (going back to van der Schaft et al. \cite{van_der_Schaft2006, van_der_Schaft2002}), 
that is, a system of first order partial differential equations of
the form
\[
\begin{cases}
\partial_{t}u+P_{1}\partial_{x}\mathcal{H}u+P_{0}\mathcal{H}u=0 & \text{on }]0,\infty[\times I\\
u(0,x)=u_{0}(x), & x\in I,
\end{cases}
\]
where $I$ is a 
 real interval, $u\colon]0,\infty[\times I\to\mathbb{R}^{n}$
is a 
vector field 
subject to suitable (linear)
boundary conditions, $\mathcal{H}\colon I\to\mathbb{R}^{n\times n}$
is a matrix field attaining values in the symmetric positive definite
matrices, $P_{1}=P_{1}^{*}\in\mathbb{R}^{n\times n}$ is invertible,
$P_{0}=-P_{0}^{*}\in\mathbb{R}^{n\times n}$. \footnote{The theory developed in this article also works for complex matrices and complex-valued functions. However, since the complex case can always be reduced to the real case by considering copies of real spaces, we restrict ourselves to the real case.}
There is a vast amount of literature
addressing the well-posedness as well as other questions related to
the equations at hand (see e.g. the monograph \cite{Jacob_Zwart_ISEM}, the survey \cite{Jacob_Zwart_2018} as well as the PhD thesis \cite{Augner2016} and the references therein). In particular, questions in the theory of boundary
control and observations are treated in the framework of port-Hamiltonian
systems. Also, higher-dimensional variants of port-Hamiltonian systems 
or port-Hamiltonian systems of higher order are discussed (see e.g. \cite{LeGorrec2006,Augner2016,Jacob2019}). As an intermediate step, several authors have dealt
with port-Hamiltonian systems on networks (see \cite{Jacob_Morris_Zwart2015,Jacob2019,Jacob_Wegner2019}, where networks are considered as examples and \cite{Waurick_Wegner_2020} for a detailed study). More precisely, the interval
$I$ is replaced by a set of intervals. In this case, the role of
boundary conditions becomes more pronounced. In particular, as a result,
there is an abundance of descriptions for boundary conditions leading
to port-Hamiltonian operators, that is, 
\[
-P_{1}\partial_{x}\mathcal{H}-P_{0}\mathcal{H}
\]
on a Hilbert space consisting of suitably many copies of $L^{2}$-type
spaces that generate bounded or (quasi-) contractive semigroups (see \cite{Jacob_Morris_Zwart2015} and \cite{Trostorff2014} for nonlinear boundary conditions). If
the operator $\mathcal{H}$ satisfies uniform boundedness conditions
(from above and below) then 
 $\mathcal{H}=1$ can be assumed with
no loss of generality, the desired boundary conditions result from
a subtle interplay of $P_{1}$ and $\partial_{x}$. Note that $P_{0}$
is then dealt with by a standard perturbation argument. It appears
to be commonly understood that reducing the port-Hamiltonian operator
to 
\[
-P_{1}\partial_{x}
\]
is the optimal way of treating port-Hamiltonian systems. The main
tool provided in the paper at hand is the transformation of the latter operator
 (by suitable congruence transformations) 
 to
\[
\partial_{x},
\]
arguably, the easiest case in which to discuss boundary conditions on networks.
We 
do not rely on semi-group theory as our method to show existence,
uniqueness and continuous dependence on the data, and as such we can 
address well-posedness of equations of the form 
\[
\left(\partial_{t}M_{0}+M_{1}+P_{1}\partial_{x}+P_{0}\right)u=f,
\]
on several copies of $L^{2}$-type spaces by explicit reduction 
to the case of
\[
\left(\partial_{t}\tilde{M}_{0}+\tilde{M}_{1}+\partial_{x}\right)\tilde{u}=\tilde{f}.
\]
Using the theory of evolutionary equations (see \cite{Picard2009,ISEM2020} and \cite[Chapter 6]{Picard_McGhee2011})
the latter equation, we shall furthermore be able to treat partial-differential-algebraic
equations; that is, we may allow $\tilde{M}_{0}$ to have a proper
nullspace, thus generalising the class of port-Hamiltonian systems
significantly.

 The article is structured as follows. We consider the operator $\partial_x$ on networks in \prettyref{sec:network} and provide a characterisation of all (linear) boundary condition leading to $m$-accretive realisations of this operator on a suitable Hilbert space (and hence $-\partial_x$ would generate a contraction semigroup). After that, we show in \prettyref{sec:The-Congruence-of} how the abstract port-Hamiltonian operator $P_1\partial_x\mathcal{H}$ can be reduced to the case treated in \prettyref{sec:network}, which allows us to provide a new proof for the well-posedness of port-Hamiltonian systems in \prettyref{sec:well-posedness}. Moreover, using the framework of evolutionary equations instead of $C_0$-semigroups, we present a new approach to boundary control problems, which has the benefit that one does not need to assume smooth diagonalisability of $\mathcal{H}$, which is a standard assumption in the existing literature (see e.g. \cite{Zwart2010}).

\section{The Operator $\partial_{x}$ on Networks}\label{sec:network}

In this section, we briefly introduce the main operator of this manuscript.
For this let $I_{k}\subseteq\mathbb{R}$ be a non-empty interval with
non-empty complement (i.e. $I_k\ne ]-\infty,\infty[$), where $k\in\{1,\ldots,N\}$ for some $N\in\mathbb{N}$. 
\begin{defn}
\label{def:derivative}We define
\begin{align*}
\partial_{x}\colon\bigoplus_{k=1}^{N}H^{1}(I_{k})\subseteq\bigoplus_{k=1}^{N}L^{2}(I_{k}) & \to\bigoplus_{k=1}^{N}L^{2}(I_{k})\\
(\phi_{k})_{k} & \mapsto(\phi_{k}')_{k},
\end{align*}
where $H^{1}(I_{k})$ is the (standard) Sobolev space of $L^{2}(I_{k})$-functions
with weak derivative representable as $L^{2}(I_{k})$-function. 
\end{defn}

With this operator at hand, we can consider the port-Hamiltonian operator
$P_{1}\partial_{x}$ for a suitable matrix $P_{1}\in\R^{N\times N}$
on $\bigoplus_{k=1}^{N}L^{2}(I_{k}).$ In order to get a well-defined
object, we have to restrict the class of possible matrices $P_{1}$.
\begin{defn}
A matrix $P_{1}\in\R^{N\times N}$ is called \emph{compatible, }if
$P_{1}$ leaves the space $\bigoplus_{k=1}^{N}L^{2}(I_{k})$ invariant. 
\end{defn}

\begin{rem}
 A typical example for a compatible matrix $P_1$ is a diagonal matrix. More generally, if we have several copies of one interval, say $I_1=\ldots=I_j$ for some $j\in\{1,\ldots,N\}$, then $P_1$ could be block-diagonal. However, note that the class of compatible matrices is bigger, and hence, allows for several couplings between the equations on each interval.
\end{rem}

Before we come to a closer analysis of the port-Hamiltonian operator,
we shall reduce the operator just introduced to a more managable reference
case. For this, we put
\begin{align*}
N_{\textnormal{f}} & \coloneqq\{k\in\{1,\ldots,N\}\,;\,I_{k}\text{ bounded}\},\\
M_{+} & \coloneqq\{k\in\{1,\ldots,N\}\,;\,\sup I_{k}=\infty\},\\
M_{-} & \coloneqq\{k\in\{1,\ldots,N\}\,;\,\inf I_{k}=-\infty\}.
\end{align*}
Moreover, for $n,m_{+},m_{-}\in\N$ we define the space
\[
L^{2}(n,m_{+}m_{-})\coloneqq\left(L^{2}(]-1/2,1/2[)\right)^{n}\oplus\left(L^{2}(]-1/2,\infty[)\right)^{m_{+}}\oplus\left(L^{2}(]-\infty,1/2[)\right)^{m_{-}}.
\]
Correspondingly, we introduce
\begin{align*}
H^{1}(n,m_{+},m_{-}) & \coloneqq\left(H^{1}\left(\,]-1/2,1/2[\,\right)\right)^{n}\oplus\left(H^{1}\left(\,]-1/2,\infty[\,\right)\right)^{m_{+}}\oplus\left(H^{1}\left(\,]-\infty,1/2[\,\right)\right)^{m_{-}},\\
H_0^{1}(n,m_{+},m_{-}) & \coloneqq\left(H_{0}^{1}\left(\,]-1/2,1/2[\,\right)\right)^{n}\oplus\left(H_{0}^{1}\left(\,]-1/2,\infty[\,\right)\right)^{m_{+}}\oplus\left(H_{0}^{1}\left(\,]-\infty,1/2[\,\right)\right)^{m_{-}},
\end{align*}
where $H_{0}^{1}$ stands for the closure of smooth functions with
compact support in the space $H^{1}$; that is the space of Sobolev functions vanish at the boundary.  We now provide a congruence allowing us to transform the operator $\partial_x$ on $\bigoplus_{k=1}^N L^2 (I_k)$ to the standard space $L^2(\#N_{\mathrm{f}},\#M_+,\#M_-)$.
\begin{prop}
\label{prop:tranformation_d_dn}Let $a,b\in\mathbb{R},a<b$.

\begin{enumerate}[(a)]

\item Consider
\begin{align*}
\phi:\left]-1/2,1/2\right[ & \to\left]a,b\right[\\
x & \mapsto-\left(x-\frac{1}{2}\right)a+\left(x+\frac{1}{2}\right)b.
\end{align*}
Then $\phi$ is invertible\footnote{It is easily verified that
\begin{align*}
\phi^{-1}:\left]a,b\right[ & \to\left]-1/2,1/2\right[\\
x & \mapsto\frac{1}{b-a}x-\frac{b+a}{2\left(b-a\right)}
\end{align*}
is the inverse of $\phi$.} and 
\begin{align*}
\Phi:L^{2}\left(\left]a,b\right[\right) & \to L^{2}\left(\left]-1/2,1/2\right[\right)\\
u & \mapsto\sqrt{b-a}\left(u\circ\phi\right)
\end{align*}
is unitary.

\item Consider 
\begin{align*}
\phi:\left]-1/2,\infty\right[ & \to\left]a,\infty\right[\\
x & \mapsto x+a+\frac{1}{2}.
\end{align*}
Then
\begin{align*}
\Phi:L^{2}\left(\left]a,\infty\right[\right) & \to L^{2}\left(\left]-1/2,\infty\right[\right)\\
u & \mapsto u\circ\phi
\end{align*}
is unitary.

\item Let $P_{1}=P_{1}^{*}\in\mathbb{R}^{N\times N}$ be a diagonal
(hence, compatible) matrix , $n\coloneqq\#N_{\mathrm{f}},m_{+}\coloneqq\#M_{+},m_{-}\coloneqq\#M_{-}$
and
\begin{align*}
\partial_{x,\textnormal{n}}\colon H^{1}(n,m_{+},m_{-})\subseteq L^{2}(n,m_{+},m_{-}) & \to L^{2}(n,m_{+},m_{-})\\
\left(\phi_{k}\right)_{k} & \mapsto\left(\phi'_{k}\right)_{k}.
\end{align*}
Then there exists a unitary operator $\Psi\colon\bigoplus_{k=1}^{N}L^{2}(I_{k})\to L^{2}(n,m_{+},m_{-})$
and a diagonal, real matrix $\tilde{P}_{1}$ such that
\[
\Psi P_{1}\partial_{x}\Psi^{*}=\tilde{P}_{1}\partial_{x,\textnormal{n}}.
\]

\end{enumerate}
\end{prop}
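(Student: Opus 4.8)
The plan is to prove parts (a) and (b) by direct computation, then assemble (c) from them. For (a), the claim that $\phi$ is a bijection $]-1/2,1/2[ \to ]a,b[$ is immediate since $\phi(x) = (b-a)x + \tfrac{a+b}{2}$ is affine with positive slope $b-a$, and the formula in the footnote is checked by substitution. To see that $\Phi$ is unitary, I would verify it is a bijective linear map preserving norms: for $u \in L^2(]a,b[)$, the change of variables $y = \phi(x)$, $\d y = (b-a)\,\d x$, gives
\begin{align*}
\|\Phi u\|_{L^2(]-1/2,1/2[)}^2 = (b-a)\int_{-1/2}^{1/2} |u(\phi(x))|^2 \,\d x = \int_a^b |u(y)|^2 \,\d y = \|u\|_{L^2(]a,b[)}^2,
\end{align*}
and the inverse is $v \mapsto (b-a)^{-1/2}(v \circ \phi^{-1})$. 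Part (b) is the same computation with the translation $\phi(x) = x + a + \tfrac12$, which has slope $1$, so no normalising factor is needed; here $\phi$ maps $]-1/2,\infty[$ onto $]a,\infty[$ bijectively and $\Phi$ is an isometric isomorphism. The analogous left-infinite case ($\sup I_k$ finite, $\inf I_k = -\infty$) is entirely parallel using $\phi(x) = x + b - \tfrac12$ mapping $]-\infty,1/2[$ onto $]-\infty,b[$, and I would remark on this rather than write it out.

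For part (c), I would first relabel the indices: choose a bijection of $\{1,\dots,N\}$ that lists the bounded intervals first (indices corresponding to $N_{\mathrm f}$), then the right-half-lines (indices in $M_+$), then the left-half-lines (indices in $M_-$); note $N = n + m_+ + m_-$ since every $I_k$ is a proper nonempty interval and hence of exactly one of these three types. This relabelling is a permutation, implemented by a permutation matrix $\Pi$ which is unitary on $\bigoplus_k L^2(I_k)$, and since $P_1$ is diagonal, $\Pi P_1 \Pi^*$ is again diagonal and $\Pi \partial_x \Pi^* = \partial_x$ on the reordered sum. Then, for each $k$ in the bounded block with $I_k = ]a_k,b_k[$ use the unitary $\Phi_k$ from (a); for each $k$ in the $M_+$ block with $I_k = ]a_k,\infty[$ use the unitary from (b); for each $k$ in the $M_-$ block use the left-sided analogue; and set $\Psi \coloneqq \big(\bigoplus_k \Phi_k\big)\Pi$, which is unitary from $\bigoplus_k L^2(I_k)$ onto $L^2(n,m_+,m_-)$.

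It remains to compute how conjugation by each $\Phi_k$ acts on $\partial_x$. For the bounded case, if $u \in H^1(]a,b[)$ then $\Phi u = \sqrt{b-a}\,(u \circ \phi)$ and by the chain rule $(\Phi u)' = \sqrt{b-a}\,\phi'\,(u' \circ \phi) = (b-a)\sqrt{b-a}\,(u'\circ\phi) = (b-a)\,\Phi(u')$; equivalently, $\Phi \partial_x \Phi^* = (b-a)\,\partial_{x,\mathrm n}$ on the corresponding $H^1(]-1/2,1/2[)$ component, and $\Phi$ maps $H^1(]a,b[)$ onto $H^1(]-1/2,1/2[)$ bijectively. For the half-line cases $\phi' = 1$, so $\Phi \partial_x \Phi^* = \partial_{x,\mathrm n}$ on those components. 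Hence $\Psi P_1 \partial_x \Psi^* = \tilde P_1 \partial_{x,\mathrm n}$ where $\tilde P_1$ is the diagonal matrix whose entry on a bounded-interval component $I_k = ]a_k,b_k[$ is $(b_k - a_k)$ times the corresponding diagonal entry of $P_1$ (after permutation), and whose entry on a half-line component equals the corresponding entry of $P_1$; in particular $\tilde P_1$ is diagonal and real, as required. I do not anticipate a genuine obstacle here — the only thing to be careful about is bookkeeping: tracking the permutation, confirming the domains match up under $\Psi$ (so that $\Psi(H^1$ on the networks$) = H^1(n,m_+,m_-)$), and making sure the scaling factor lands on $\tilde P_1$ rather than distorting $\partial_{x,\mathrm n}$; the diagonality of $P_1$ is what keeps $\tilde P_1$ diagonal, and this is exactly why the hypothesis is imposed in (c).
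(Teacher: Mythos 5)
Your proof is correct and follows essentially the same route as the paper: (a) and (b) by change of variables, and (c) by relabelling the intervals, conjugating componentwise with the $\Phi_k$, and absorbing the Jacobian factors into a diagonal matrix multiplying $\partial_{x,\mathrm{n}}$. One small remark: from your own relation $\partial_{x,\mathrm{n}}\Phi=(b-a)\Phi\partial_{x}$ one gets $\Phi\partial_{x}\Phi^{*}=(b-a)^{-1}\partial_{x,\mathrm{n}}$, so the bounded-interval entries of $\tilde{P}_{1}$ should carry the factor $(b_{k}-a_{k})^{-1}$ rather than $(b_{k}-a_{k})$ --- this inversion is harmless for the existence claim (and the paper's own proof records the same convention), but the ``equivalently'' step as you wrote it is off by an inverse.
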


\begin{proof}
(a) Let $u\in L^{2}(]a,b[)$. Then we compute
\begin{equation*}
\int_{-1/2}^{1/2}\left|\sqrt{b-a}\:u\left(\phi\left(x\right)\right)\right|^{2}dx=\int_{-1/2}^{1/2}\left|u\left(\phi\left(x\right)\right)\right|^{2}\left(b-a\right)dx  =\int_{a}^{b}\left|u\left(y\right)\right|^{2}dy.
\end{equation*}
Since $\Phi$ is also onto by the invertibility of $\phi$, the assertion follows.

(b) The assertion follows with an elementary computation similar to (a).

(c) Without loss of generality, we assume that $\{1,\ldots,n\}=N_{\textnormal{f}}$
and that $\{n+1,\ldots,n+m_{+}\}=M_{+}$ as well as $\{n+m_{+}+1,\ldots,N\}=M_{-}$.
For $k\in\{1,\ldots,n\}$ we find $a_{k},b_{k}\in\mathbb{R}$ such
that $I_{k}=]a_{k},b_{k}[$. Let $\phi_{k}$ be as in (a) with $a_{k},b_{k}$
replacing $a,b$. Then we have for all $k\in\{1,\ldots,n\}$
\[
\partial_{x}\left(u\circ\phi_{k}\right)=\left(b_{k}-a_{k}\right)\left(\partial_{x}u\right)\circ\phi_{k}.
\]
Hence, 
\[
\partial_{x,\textnormal{n}}\Phi_{k}=\left(b_{k}-a_{k}\right)\Phi_{k}\partial_{x},
\]
where we denoted $\Phi_{k}$ according to $\Phi$ as in (a) replacing
$\phi$ by $\phi_{k}$. Next, let $\Phi_{n+1},\ldots,\Phi_{m_{+}}$
be as in (b) with $a$ appropriately replaced in order that $\Phi_{k}\colon L^{2}(I_{k})\to L^{2}(]-1/2,\infty[)$
is unitary, $k\in\{n+1,\ldots,n+m_{+}\}$. For $k\in\{n+m_{+}+1,\ldots,N\}$,
we find, similar to (b), a unitary $\Phi_{k}\colon L^{2}(I_{k})\to L^{2}(]-\infty,1/2[).$
Thus, for all $k\in\{n+1,\ldots,N\},$we obtain
\[
\partial_{x,\textnormal{n}}\Phi_{k}=\Phi_{k}\partial_{x}.
\]
In consequence, denoting 
\[
\Psi\coloneqq\left(\begin{array}{cccccc}
\Phi_{1} & 0 &  & \cdots &  & 0\\
0 & \ddots\\
 &  & \Phi_{n} & \ddots &  & \vdots\\
\vdots &  & \ddots & \Phi_{n+1}\\
 &  &  &  & \ddots & 0\\
0 &  & \cdots &  & 0 & \Phi_{N}
\end{array}\right)\text{ and }\mathbf{b-a}\coloneqq\left(\begin{array}{cccccc}
b_{1}-a_{1} & 0 &  & \cdots &  & 0\\
0 & \ddots\\
 &  & b_{n}-a_{n} & \ddots &  & \vdots\\
\vdots &  & \ddots & 1\\
 &  &  &  & \ddots & 0\\
0 &  & \cdots &  & 0 & 1
\end{array}\right),
\]
we deduce that $\Psi$ is unitary and since $P_{1}$ is diagonal
\begin{align*}
\Psi P_{1}\partial_{x}\Psi^{*} & =\Psi P_{1}\left(\mathbf{b-a}\right)\Psi^{*}\partial_{x,\textnormal{n}}\\
 & =P_{1}\left(\mathbf{b-a}\right)\partial_{x,\textnormal{n}}\\
 & =\sqrt{\left(\mathbf{b-a}\right)}P_{1}\sqrt{\left(\mathbf{b-a}\right)}\partial_{x,\textnormal{n}}.
\end{align*}
Thus, the assertion follows with 
\[
\tilde{P}_{1}=\sqrt{\left(\mathbf{b-a}\right)}P_{1}\sqrt{\left(\mathbf{b-a}\right)}.\qedhere
\]
\end{proof}
\begin{rem}
\label{rem:BlockdiagonalP1}It can easily be seen from the proof that
the statement in (c) remains true, if $m_{+}=m_{-}=0$, $I_{k}=]a,b[$
for all $k\in\{1,\ldots,N\}$ and $P_{1}=P_{1}^{*}$ (so $P_{1}$
need not necessarily be diagonal). The matrix $\tilde{P}_{1}$ claimed
to exist then has the form
\[
\tilde{P}_{1}=\sqrt{\left(b-a\right)}P_{1}\sqrt{\left(b-a\right)}
\]
and is not necessarily diagonal either.
\end{rem}

If it is clear from the context, we will also write $\partial_{x}$
instead of $\partial_{x,\mathrm{n}}$ (as it is defined in Proposition
\ref{prop:tranformation_d_dn} (c)). 

We recall that by the Sobolev embedding theorem, 
\[
H^{1}(n,m_{+},m_{-})\subseteq\left(C\left([-1/2,1/2]\right)\right)^{n}\times\left(C_{0}\left(\,[-1/2,\infty[\,\right)\right)^{m_{+}}\times\left(C_{0}\left(\,]-\infty,1/2]\,\right)\right)^{m_{-}},
\]
where we denote by $C_0(I)$ for an interval $I\subseteq \R$ the closure of $C_c(I)$ (continuous functions with compact support) with repsect to the supremum-norm.  

This fact will be used frequently throughout the manuscript. 

A consequence of integration by parts and the Sobolev embedding theorem
is the next proposition. 
\begin{prop}
\label{prop:inner_product_d}Let $u=\left(\begin{array}{c}
u_{1/2}\\
u_{\infty}\\
u_{-\infty}
\end{array}\right),v=\left(\begin{array}{c}
v_{1/2}\\
v_{\infty}\\
v_{-\infty}
\end{array}\right)\in H^{1}(n,m_{+},m_{-}).$ Then we find 
\begin{align*}
\left\langle \partial_{x}u,v\right\rangle +\left\langle u,\partial_{x}v\right\rangle  & =\left\langle u_{1/2}\left(\frac{1}{2}-\right),v_{1/2}\left(\frac{1}{2}-\right)\right\rangle -\left\langle u_{1/2}\left(-\frac{1}{2}+\right),v_{1/2}\left(-\frac{1}{2}+\right)\right\rangle +\\
 & -\left\langle u_{\infty}\left(-\frac{1}{2}+\right),v_{\infty}\left(-\frac{1}{2}+\right)\right\rangle +\left\langle u_{-\infty}\left(\frac{1}{2}-\right),v_{-\infty}\left(\frac{1}{2}-\right)\right\rangle ,\\
 & =\left\langle \left(\begin{array}{c}
u_{1/2}\left(\frac{1}{2}-\right)\\
u_{-\infty}\left(\frac{1}{2}-\right)
\end{array}\right),\left(\begin{array}{c}
v_{1/2}\left(\frac{1}{2}-\right)\\
v_{-\infty}\left(\frac{1}{2}-\right)
\end{array}\right)\right\rangle -\left\langle \left(\begin{array}{c}
u_{1/2}\left(-\frac{1}{2}+\right)\\
u_{\infty}\left(-\frac{1}{2}+\right)
\end{array}\right),\left(\begin{array}{c}
v_{1/2}\left(-\frac{1}{2}+\right)\\
v_{\infty}\left(-\frac{1}{2}+\right)
\end{array}\right)\right\rangle .
\end{align*}
\end{prop}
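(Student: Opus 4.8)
The plan is to split the identity along the direct-sum structure and reduce it to a scalar integration-by-parts formula on each of the three types of interval occurring in $H^{1}(n,m_{+},m_{-})$. Since $\partial_{x}$ and the inner product on $L^{2}(n,m_{+},m_{-})$ both act componentwise, it suffices to establish, for a single scalar $H^{1}$-component $f,g$ on an interval $I$ (one of $]-1/2,1/2[$, $]-1/2,\infty[$, $]-\infty,1/2[$), the identity
\[
\int_{I}f'g+\int_{I}fg'=\bigl[fg\bigr]_{\partial I},
\]
where the right-hand side is the value of $fg$ at a finite endpoint of $I$ (which makes sense by the embedding $H^{1}(I)\hookrightarrow C(\overline I)$ recalled above) and is $0$ at any infinite end. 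Summing the resulting $N=n+m_{+}+m_{-}$ scalar contributions, and reading the inner products on $\R^{n},\R^{m_{+}},\R^{m_{-}}$ as sums over components, will give the first displayed equality.

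For the scalar formula I would argue as follows. The product $fg$ lies in $W^{1,1}(I)$: indeed $fg\in L^{1}(I)$ and $(fg)'=f'g+fg'\in L^{1}(I)$, both by Cauchy--Schwarz since $f,g,f',g'\in L^{2}(I)$ and $f,g\in L^{\infty}(I)$. Hence $fg$ has an absolutely continuous representative whose limit at each infinite end of $I$ exists (the derivative being integrable), and that limit must vanish because $fg\in L^{1}(I)$; the fundamental theorem of calculus applied to this representative then yields the displayed identity, with the infinite-end contribution equal to $0$. (Equivalently, one can verify the identity first for $f,g\in C^{\infty}(\overline I)$ compactly supported away from any infinite end, where it is elementary, and then extend by density in $H^{1}(I)$, using that $(f,g)\mapsto\int_{I}f'g+\int_{I}fg'$ is continuous on $H^{1}(I)^{2}$ and that point evaluation at the finite endpoints is continuous on $H^{1}(I)$ by the Sobolev embedding.)

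The step that needs genuine care is exactly this treatment of the unbounded components: one must be sure that integration by parts is valid on $]-1/2,\infty[$ and $]-\infty,1/2[$ and that the boundary terms at $\pm\infty$ drop out — which is precisely what the embedding $H^{1}\hookrightarrow C_{0}$ quoted before the proposition guarantees. The remainder is bookkeeping: each of the $n$ components over $]-1/2,1/2[$ contributes $u_{1/2}(\tfrac12-)v_{1/2}(\tfrac12-)-u_{1/2}(-\tfrac12+)v_{1/2}(-\tfrac12+)$ componentwise, each of the $m_{+}$ components over $]-1/2,\infty[$ contributes $-u_{\infty}(-\tfrac12+)v_{\infty}(-\tfrac12+)$, and each of the $m_{-}$ components over $]-\infty,1/2[$ contributes $u_{-\infty}(\tfrac12-)v_{-\infty}(\tfrac12-)$. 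Adding these produces the first identity; regrouping the terms evaluated at $\tfrac12-$ and those evaluated at $-\tfrac12+$ into the block vectors displayed in the statement yields the second, equivalent form.
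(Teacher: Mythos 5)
Your argument is correct and is exactly the route the paper takes (the paper merely asserts the proposition as "a consequence of integration by parts and the Sobolev embedding theorem" without writing out the details). Your componentwise reduction, the $W^{1,1}$ justification of the vanishing boundary terms at the infinite ends, and the final regrouping all check out, including the signs.
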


Denote by $\mathring{\partial}_x$ the restriction of $\partial_x$ to $H_0^1(n,m_+,m_-)$. A straighforward consequence of the previous observation is the following.
\begin{prop}
\label{thm:adjoint_d}The operators $\partial_{x}$ and $\mathring{\partial}_{x}$
are densely defined and closed on $L^{2}(n,m_{+},m_{-})$. Moreover,
\[
\partial_{x}^{*}=-\mathring{\partial}_{x}\text{ and }-\mathring{\partial}_{x}^{*}=\partial_{x},
\]
where the adjoints are computed in $L^2(n,m_+,m_-)$. 
\end{prop}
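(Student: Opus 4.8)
The plan is to prove each assertion directly, using the integration-by-parts identity of \prettyref{prop:inner_product_d} as the only substantial input. Density and closedness are routine: each factor $H^{1}$ (and $H_{0}^{1}$) of an interval contains the test functions $C_{c}^{\infty}$ of that interval, so $\dom(\partial_{x})=H^{1}(n,m_{+},m_{-})$ and $\dom(\mathring{\partial}_{x})=H_{0}^{1}(n,m_{+},m_{-})$ are dense in $L^{2}(n,m_{+},m_{-})$. Closedness of $\partial_{x}$ is just completeness of $H^{1}$ in its graph norm: if $\phi_{j}\to\phi$ and $\partial_{x}\phi_{j}\to\psi$ in $L^{2}$, then passing to the limit in $\int\phi_{j}'\eta=-\int\phi_{j}\eta'$ for every $\eta\in C_{c}^{\infty}$ exhibits $\psi$ as the weak derivative of $\phi$, so $\phi\in H^{1}$ with $\partial_{x}\phi=\psi$. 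Since the graph norm of $\partial_{x}$ is the $H^{1}$-norm and $H_{0}^{1}(n,m_{+},m_{-})$ is by definition closed in $H^{1}(n,m_{+},m_{-})$, closedness of $\mathring{\partial}_{x}$ follows as well.

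For the adjoint identity I would prove the two inclusions separately. To see $-\mathring{\partial}_{x}\subseteq\partial_{x}^{*}$, let $v\in H_{0}^{1}(n,m_{+},m_{-})$; all finite-endpoint traces of $v$ vanish (this is the meaning of $H_{0}^{1}$, via the Sobolev embedding recalled above), so the right-hand side of \prettyref{prop:inner_product_d} is $0$ and hence $\langle\partial_{x}u,v\rangle=\langle u,-\partial_{x}v\rangle$ for all $u\in\dom(\partial_{x})$, i.e.\ $v\in\dom(\partial_{x}^{*})$ with $\partial_{x}^{*}v=-\mathring{\partial}_{x}v$. Conversely, let $v\in\dom(\partial_{x}^{*})$ and set $w\coloneqq\partial_{x}^{*}v$, so $\langle\partial_{x}u,v\rangle=\langle u,w\rangle$ for all $u\in H^{1}(n,m_{+},m_{-})$; restricting $u$ to $C_{c}^{\infty}$ of the respective interiors identifies $-w$ as the weak derivative of $v$, so $v\in H^{1}(n,m_{+},m_{-})$ and $w=-\partial_{x}v$. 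Now that $v\in H^{1}$, \prettyref{prop:inner_product_d} applies and the boundary expression there equals $\langle\partial_{x}u,v\rangle+\langle u,\partial_{x}v\rangle=\langle u,w+\partial_{x}v\rangle=0$ for every $u\in H^{1}(n,m_{+},m_{-})$. Choosing $u$ with arbitrary prescribed endpoint values (affine on each bounded component, exponentially decaying on each half-line) and reading off the two independent boundary blocks of \prettyref{prop:inner_product_d} forces every finite-endpoint trace of $v$ to vanish, so $v\in H_{0}^{1}(n,m_{+},m_{-})$ and $\partial_{x}^{*}v=-\mathring{\partial}_{x}v$. Combining the inclusions gives $\partial_{x}^{*}=-\mathring{\partial}_{x}$; since $\partial_{x}$ is densely defined and closed, $\partial_{x}^{**}=\partial_{x}$, and taking adjoints together with $(-\mathring{\partial}_{x})^{*}=-\mathring{\partial}_{x}^{*}$ yields $-\mathring{\partial}_{x}^{*}=\partial_{x}$.

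The only step that is not purely formal is the last part of $\partial_{x}^{*}\subseteq-\mathring{\partial}_{x}$, namely upgrading ``$v\in H^{1}$ with vanishing finite-endpoint traces'' to ``$v\in H_{0}^{1}$''. This rests on the surjectivity of the trace map from $H^{1}(n,m_{+},m_{-})$ onto the tuple of finite-endpoint values — needed to decouple the two boundary blocks of \prettyref{prop:inner_product_d} and annihilate each trace of $v$ separately — together with the standard one-dimensional fact that $H_{0}^{1}$ of an interval consists precisely of the $H^{1}$-functions vanishing at its finite endpoints; on the half-line components, decay at infinity is automatic from the embedding $H^{1}\hookrightarrow C_{0}$ recalled before the statement. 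I expect this to be the main (though still mild) obstacle, everything else being a direct consequence of \prettyref{prop:inner_product_d} and general adjoint calculus.
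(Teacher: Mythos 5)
Your argument is correct and follows exactly the route the paper intends: the paper offers no written proof, declaring the proposition a straightforward consequence of Proposition \ref{prop:inner_product_d}, and your derivation (density and closedness from standard Sobolev facts, the inclusion $-\mathring{\partial}_{x}\subseteq\partial_{x}^{*}$ from vanishing traces, the reverse inclusion from testing against $C_{c}^{\infty}$ plus surjectivity of the trace map, and the second identity by taking adjoints) fills in precisely those details. The one point you rightly flag — that an $H^{1}$ function with vanishing finite-endpoint traces lies in $H_{0}^{1}$ — is indeed the standard one-dimensional fact and needs no further justification here.
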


The next statement summarises the description of all linear, dissipative
operator extensions of the minimal operator $\mathring{\partial}_{1}$ (see also \cite[p. 18-22]{Showalter1997} for related material).

\begin{thm}
\label{thm:char_accretivity}Let $\mathring{\partial}_{x}\subseteq D\subseteq\partial_{x}$
be a linear operator on $L^{2}(n,m_{+},m_{-})$. 

\begin{enumerate}[(a)]

\item Then the following statements are equivalent:

\begin{enumerate}[(i)]

\item $D$ is accretive; that is, 
\[
\Re\langle Du,u\rangle\geq0\quad(u\in\mathrm{dom}(D)).
\]

\item There exists $M\in\mathbb{R}^{\left(n+m_{+}\right)\times\left(n+m_{-}\right)}$
with $M^{*}M\leq1$ such that
\[
\mathrm{dom}(D)\subseteq\left\{ u=(u_{1/2},u_{\infty},u_{-\infty})\in H^{1}(n,m_{+},m_{-})\,;\,M\left(\begin{array}{c}
u_{1/2}\left(\frac{1}{2}-\right)\\
u_{-\infty}\left(\frac{1}{2}-\right)
\end{array}\right)+\left(\begin{array}{c}
u_{1/2}\left(-\frac{1}{2}+\right)\\
u_{\infty}\left(-\frac{1}{2}+\right)
\end{array}\right)=0\right\} .
\]

\end{enumerate}

\item Let $M\in\mathbb{R}^{\left(n+m_{+}\right)\times\left(n+m_{-}\right)}$
be such that 
\[
\mathrm{dom}(D)=\left\{ u=(u_{1/2},u_{\infty},u_{-\infty})\in H^{1}(n,m_{+},m_{-})\,;\,M\left(\begin{array}{c}
u_{1/2}\left(\frac{1}{2}-\right)\\
u_{-\infty}\left(\frac{1}{2}-\right)
\end{array}\right)+\left(\begin{array}{c}
u_{1/2}\left(-\frac{1}{2}+\right)\\
u_{\infty}\left(-\frac{1}{2}+\right)
\end{array}\right)=0\right\} .
\]
Then\footnote{Note that skew-selfadjointness of $D$ requires $M$ to be unitary and so
in particular $m_{+}=m_{-}$ .} $\mathring{\partial}_{x}\subseteq-D^{*}\subseteq\partial_{x}$ and
\[
\mathrm{dom}(D^{*})=\left\{ v=(v_{1/2},v_{\infty},v_{-\infty})\in H^{1}(n,m_{+},m_{-})\,;\,\left(\begin{array}{c}
v_{1/2}\left(\frac{1}{2}-\right)\\
v_{-\infty}\left(\frac{1}{2}-\right)
\end{array}\right)+M^{*}\left(\begin{array}{c}
v_{1/2}\left(-\frac{1}{2}+\right)\\
v_{\infty}\left(-\frac{1}{2}+\right)
\end{array}\right)=0\right\} .
\]

\item $D$ is \emph{maximal accretive}; that is, $D$ is accretive
and there exists no accretive relation extending $D$ (or, equivalently,
$D$ and $D^{\ast}$ are accretive) if and only if there exists $M\in\mathbb{R}^{\left(n+m_{+}\right)\times\left(n+m_{-}\right)}$
with $M^{*}M\leq1$ such that
\[
\mathrm{dom}(D)=\left\{ u=(u_{1/2},u_{\infty},u_{-\infty})\in H^{1}(n,m_{+},m_{-})\,;\,M\left(\begin{array}{c}
u_{1/2}\left(\frac{1}{2}-\right)\\
u_{-\infty}\left(\frac{1}{2}-\right)
\end{array}\right)+\left(\begin{array}{c}
u_{1/2}\left(-\frac{1}{2}+\right)\\
u_{\infty}\left(-\frac{1}{2}+\right)
\end{array}\right)=0\right\} .
\]

\end{enumerate}
\end{thm}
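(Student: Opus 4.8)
The whole theorem is a bookkeeping exercise built on a single identity, and my plan is to set that up first. For $u=(u_{1/2},u_{\infty},u_{-\infty})\in H^{1}(n,m_{+},m_{-})$ abbreviate the two boundary trace vectors of Proposition~\ref{prop:inner_product_d} as
\[
\gamma_{r}u:=\begin{pmatrix}u_{1/2}(\tfrac12-)\\ u_{-\infty}(\tfrac12-)\end{pmatrix}\in\R^{n+m_{-}},\qquad \gamma_{\ell}u:=\begin{pmatrix}u_{1/2}(-\tfrac12+)\\ u_{\infty}(-\tfrac12+)\end{pmatrix}\in\R^{n+m_{+}}.
\]
Reading Proposition~\ref{prop:inner_product_d} with $v=u$ gives $2\Re\langle\partial_{x}u,u\rangle=|\gamma_{r}u|^{2}-|\gamma_{\ell}u|^{2}$, and since $\mathring{\partial}_{x}\subseteq D\subseteq\partial_{x}$ forces $Du=\partial_{x}u$ on $\dom(D)$, accretivity of $D$ is \emph{equivalent} to $|\gamma_{\ell}u|\le|\gamma_{r}u|$ for all $u\in\dom(D)$. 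I would also record once that the combined trace $\gamma:=(\gamma_{\ell},\gamma_{r})\colon H^{1}(n,m_{+},m_{-})\to\R^{n+m_{+}}\times\R^{n+m_{-}}$ is linear, continuous for the graph norm (Sobolev embedding), has kernel exactly $H^{1}_{0}(n,m_{+},m_{-})$, and is surjective (prescribe endpoint values by affine functions on the bounded components and by $\ell_k\mathrm{e}^{-(x+1/2)}$, $r_k\mathrm{e}^{x-1/2}$ on the half-lines). Consequently $\dom(D)\mapsto\gamma(\dom(D))$ puts the operators $D$ with $\mathring{\partial}_{x}\subseteq D\subseteq\partial_{x}$ in bijection with the linear subspaces of $\R^{n+m_{+}}\times\R^{n+m_{-}}$.

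For part (a), the implication (ii)$\Rightarrow$(i) is immediate: if $M^{*}M\le1$ and $M\gamma_{r}u+\gamma_{\ell}u=0$ on $\dom(D)$, then $|\gamma_{\ell}u|=|M\gamma_{r}u|\le|\gamma_{r}u|$, so $2\Re\langle Du,u\rangle=|\gamma_{r}u|^{2}-|\gamma_{\ell}u|^{2}\ge0$. For (i)$\Rightarrow$(ii), set $V:=\gamma(\dom(D))$; accretivity gives $|\ell|\le|r|$ for all $(\ell,r)\in V$, so in particular $(\ell,0)\in V$ forces $\ell=0$, i.e.\ the projection $\pi_{r}|_{V}$ onto the second factor is injective and $V$ is the graph of a linear map $S$ from $\pi_{r}(V)\subseteq\R^{n+m_{-}}$ into $\R^{n+m_{+}}$ with $|Sr|\le|r|$. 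Extend $S$ to a contraction on all of $\R^{n+m_{-}}$ by precomposing with the orthogonal projection onto $\pi_{r}(V)$, and put $M:=-S$; then $M^{*}M\le1$ and $\gamma(\dom(D))\subseteq\{(\ell,r):Mr+\ell=0\}$, which is (ii).

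For part (b), Proposition~\ref{thm:adjoint_d} ($\partial_{x}^{*}=-\mathring{\partial}_{x}$, $\mathring{\partial}_{x}^{*}=-\partial_{x}$) together with $\mathring{\partial}_{x}\subseteq D\subseteq\partial_{x}$ gives $\mathring{\partial}_{x}\subseteq-D^{*}\subseteq\partial_{x}$; in particular $\dom(D^{*})\subseteq H^{1}(n,m_{+},m_{-})$ and $D^{*}v=-\partial_{x}v$ there. Hence $v\in\dom(D^{*})$ iff $v\in H^{1}(n,m_{+},m_{-})$ and $\langle\partial_{x}u,v\rangle+\langle u,\partial_{x}v\rangle=0$ for all $u\in\dom(D)$, which by Proposition~\ref{prop:inner_product_d} reads $\langle\gamma_{r}u,\gamma_{r}v\rangle=\langle\gamma_{\ell}u,\gamma_{\ell}v\rangle$ for all $u\in\dom(D)$. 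Substituting $\gamma_{\ell}u=-M\gamma_{r}u$ and using that $\gamma_{r}$ maps $\dom(D)$ onto all of $\R^{n+m_{-}}$ (surjectivity of $\gamma$), this becomes $\langle r,\gamma_{r}v+M^{*}\gamma_{\ell}v\rangle=0$ for all $r$, i.e.\ $\gamma_{r}v+M^{*}\gamma_{\ell}v=0$, which is the asserted domain. Finally, for part (c): if $D$ is maximal accretive, then by (a) there is $M$ with $M^{*}M\le1$ such that $D$ is extended by the restriction $D_{M}$ of $\partial_{x}$ to $\{u:M\gamma_{r}u+\gamma_{\ell}u=0\}$; by (a) again $D_{M}$ is accretive, so maximality forces $D=D_{M}$. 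Conversely, given $\dom(D)=\{u\in H^{1}(n,m_{+},m_{-}):M\gamma_{r}u+\gamma_{\ell}u=0\}$ with $M^{*}M\le1$, $D$ is accretive by (a) and closed (its domain is $\gamma$-closed), while (b) gives $\dom(D^{*})=\{v:\gamma_{r}v+M^{*}\gamma_{\ell}v=0\}$ with $D^{*}v=-\partial_{x}v$, whence
\[
2\Re\langle D^{*}v,v\rangle=|\gamma_{\ell}v|^{2}-|\gamma_{r}v|^{2}=|\gamma_{\ell}v|^{2}-|M^{*}\gamma_{\ell}v|^{2}\ge0
\]
since $\|M^{*}\|=\|M\|\le1$; so $D^{*}$ is accretive, and by the equivalence quoted in the statement (for a closed densely defined $D$, accretivity of $D$ and $D^{*}$ yields bijectivity of $1+D$, hence maximal accretivity), $D$ is maximal accretive.

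The step I expect to be the only genuinely non-self-contained point is that last sentence of (c): the passage from "$D$ and $D^{*}$ accretive" to "no accretive relation extends $D$" is classical m-accretive theory rather than something in the excerpt. Beyond that, the one place requiring care (as opposed to difficulty) is the asymmetry of the set-up — the two trace spaces have different dimensions $n+m_{+}$ and $n+m_{-}$, because on $\left]-1/2,\infty\right[$ only the left endpoint carries data and on $\left]-\infty,1/2\right[$ only the right one does — so one must be careful throughout about which of $\gamma_{\ell}u,\gamma_{r}u$ is the "free" datum (it is $\gamma_{r}u$, matching the shape of $M$), and about the extension of the contraction $S$ to a contraction $M\in\R^{(n+m_{+})\times(n+m_{-})}$ in part (a). Everything else is a direct unwinding of Proposition~\ref{prop:inner_product_d} and the surjectivity/kernel of the trace map.
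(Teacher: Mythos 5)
Your proposal is correct and follows essentially the same route as the paper's proof: part (a) via the trace identity from Proposition \ref{prop:inner_product_d} and the construction of $M$ as the (negative of the) graph map of $\gamma(\mathrm{dom}(D))$ extended by zero off $\pi_r(V)$, part (b) via the same integration-by-parts computation plus surjectivity of the trace restricted to $\mathrm{dom}(D)$, and part (c) by combining (a) and (b) with the classical equivalence between ``$D$ and $D^*$ accretive, $D$ closed and densely defined'' and maximal accretivity (which the paper likewise invokes without proof). The only cosmetic difference is that you package the boundary data into a subspace $V\subseteq\R^{n+m_+}\times\R^{n+m_-}$ and phrase the extension of $S$ as precomposition with an orthogonal projection, which is the same as the paper's ``$M=0$ on $R^{\bot}$''.
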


\begin{proof}
(a) Assume $D$ is accretive. Then, by Proposition \ref{prop:inner_product_d},
we deduce that, for all $u\in\mathrm{dom}(D)\subseteq H^{1}(n,m_{+},m_{-}),$
\[
0\leq2\Re\langle Du,u\rangle=\left\langle \left(\begin{array}{c}
u_{1/2}\left(\frac{1}{2}-\right)\\
u_{-\infty}\left(\frac{1}{2}-\right)
\end{array}\right),\left(\begin{array}{c}
u_{1/2}\left(\frac{1}{2}-\right)\\
u_{-\infty}\left(\frac{1}{2}-\right)
\end{array}\right)\right\rangle -\left\langle \left(\begin{array}{c}
u_{1/2}\left(-\frac{1}{2}+\right)\\
u_{\infty}\left(-\frac{1}{2}+\right)
\end{array}\right),\left(\begin{array}{c}
u_{1/2}\left(-\frac{1}{2}+\right)\\
u_{\infty}\left(-\frac{1}{2}+\right)
\end{array}\right)\right\rangle .
\]
Hence, 
\begin{equation}\label{eq:M}
\left\langle \left(\begin{array}{c}
u_{1/2}\left(-\frac{1}{2}+\right)\\
u_{\infty}\left(-\frac{1}{2}+\right)
\end{array}\right),\left(\begin{array}{c}
u_{1/2}\left(-\frac{1}{2}+\right)\\
u_{\infty}\left(-\frac{1}{2}+\right)
\end{array}\right)\right\rangle \leq\left\langle \left(\begin{array}{c}
u_{1/2}\left(\frac{1}{2}-\right)\\
u_{-\infty}\left(\frac{1}{2}-\right)
\end{array}\right),\left(\begin{array}{c}
u_{1/2}\left(\frac{1}{2}-\right)\\
u_{-\infty}\left(\frac{1}{2}-\right)
\end{array}\right)\right\rangle .
\end{equation}
Therefore, for all $\left(\begin{array}{c}
u_{1/2}\left(\frac{1}{2}-\right)\\
u_{-\infty}\left(\frac{1}{2}-\right)
\end{array}\right)$ with $u\in\mathrm{dom}(D)$, $M\left(\begin{array}{c}
u_{1/2}\left(\frac{1}{2}-\right)\\
u_{-\infty}\left(\frac{1}{2}-\right)
\end{array}\right)\coloneqq-\left(\begin{array}{c}
u_{1/2}\left(-\frac{1}{2}+\right)\\
u_{\infty}\left(-\frac{1}{2}+\right)
\end{array}\right)$ gives rise to a linear mapping defined on 
\[
R\coloneqq\left\{ \left(\begin{array}{c}
u_{1/2}\left(\frac{1}{2}-\right)\\
u_{-\infty}\left(\frac{1}{2}-\right)
\end{array}\right)\,;\,u\in\mathrm{dom}(D)\right\} \subseteq\mathbb{R}^{n+m_{-}}.
\]
 We put $M=0$ on $R^{\bot_{\mathbb{R}^{n+m_{-}}}}$. Thus, $M\in L(\mathbb{R}^{n+m_{-}},\mathbb{R}^{n+m_{+}})$
with $\|M\|\leq1$ by \prettyref{eq:M}. Hence, identifying $M$ with its matrix representation
$M\in\mathbb{R}^{\left(n+m_{+}\right)\times\left(n+m_{-}\right)}$
we obtain (ii). Next, assume (ii) and let $M$ be as in (ii). Then
we compute using Proposition \ref{prop:inner_product_d} for all $u\in\mathrm{dom}(D)$
\begin{align*}
2\Re\langle Du,u\rangle & =\left\langle \left(\begin{array}{c}
u_{1/2}\left(\frac{1}{2}-\right)\\
u_{-\infty}\left(\frac{1}{2}-\right)
\end{array}\right),\left(\begin{array}{c}
u_{1/2}\left(\frac{1}{2}-\right)\\
u_{-\infty}\left(\frac{1}{2}-\right)
\end{array}\right)\right\rangle -\left\langle \left(\begin{array}{c}
u_{1/2}\left(-\frac{1}{2}+\right)\\
u_{\infty}\left(-\frac{1}{2}+\right)
\end{array}\right),\left(\begin{array}{c}
u_{1/2}\left(-\frac{1}{2}+\right)\\
u_{\infty}\left(-\frac{1}{2}+\right)
\end{array}\right)\right\rangle \\
 & =\left\langle \left(\begin{array}{c}
u_{1/2}\left(\frac{1}{2}-\right)\\
u_{-\infty}\left(\frac{1}{2}-\right)
\end{array}\right),\left(\begin{array}{c}
u_{1/2}\left(\frac{1}{2}-\right)\\
u_{-\infty}\left(\frac{1}{2}-\right)
\end{array}\right)\right\rangle -\left\langle M\left(\begin{array}{c}
u_{1/2}\left(\frac{1}{2}-\right)\\
u_{-\infty}\left(\frac{1}{2}-\right)
\end{array}\right),M\left(\begin{array}{c}
u_{1/2}\left(\frac{1}{2}-\right)\\
u_{-\infty}\left(\frac{1}{2}-\right)
\end{array}\right)\right\rangle \\
 & \geq0
\end{align*}
as $\|M\|\leq 1$.

(b) It is clear that $H_{0}^{1}(n,m_{+},m_{-})\subseteq\mathrm{dom}(D^{*})\subseteq H^{1}(n,m_{+},m_{-}).$
Thus, for $u\in\mathrm{dom}(D)$ and $v\in H^{1}(n,m_{+},m_{-})$
we can use Proposition \ref{prop:inner_product_d} and deduce
\begin{align*}
\langle Du,v\rangle+\langle u,\partial_{x}v\rangle & =\left\langle \left(\begin{array}{c}
u_{1/2}\left(\frac{1}{2}-\right)\\
u_{-\infty}\left(\frac{1}{2}-\right)
\end{array}\right),\left(\begin{array}{c}
v_{1/2}\left(\frac{1}{2}-\right)\\
v_{-\infty}\left(\frac{1}{2}-\right)
\end{array}\right)\right\rangle -\left\langle \left(\begin{array}{c}
u_{1/2}\left(-\frac{1}{2}+\right)\\
u_{\infty}\left(-\frac{1}{2}+\right)
\end{array}\right),\left(\begin{array}{c}
v_{1/2}\left(-\frac{1}{2}+\right)\\
v_{\infty}\left(-\frac{1}{2}+\right)
\end{array}\right)\right\rangle \\
 & =\left\langle \left(\begin{array}{c}
u_{1/2}\left(\frac{1}{2}-\right)\\
u_{-\infty}\left(\frac{1}{2}-\right)
\end{array}\right),\left(\begin{array}{c}
v_{1/2}\left(\frac{1}{2}-\right)\\
v_{-\infty}\left(\frac{1}{2}-\right)
\end{array}\right)\right\rangle +\left\langle M\left(\begin{array}{c}
u_{1/2}\left(\frac{1}{2}-\right)\\
u_{-\infty}\left(\frac{1}{2}-\right)
\end{array}\right),\left(\begin{array}{c}
v_{1/2}\left(-\frac{1}{2}+\right)\\
v_{\infty}\left(-\frac{1}{2}+\right)
\end{array}\right)\right\rangle \\
 & =\left\langle \left(\begin{array}{c}
u_{1/2}\left(\frac{1}{2}-\right)\\
u_{-\infty}\left(\frac{1}{2}-\right)
\end{array}\right),\left(\begin{array}{c}
v_{1/2}\left(\frac{1}{2}-\right)\\
v_{-\infty}\left(\frac{1}{2}-\right)
\end{array}\right)+M^{*}\left(\begin{array}{c}
v_{1/2}\left(-\frac{1}{2}+\right)\\
v_{\infty}\left(-\frac{1}{2}+\right)
\end{array}\right)\right\rangle .
\end{align*}
Next, let $(x,y)\in\mathbb{R}^{n}\times\mathbb{R}^{m_{-}}.$ Then
$M(x,y)\in\mathbb{R}^{n}\times\mathbb{R}^{m_{+}}.$ Using suitable
piecewise linear functions, it is not difficult to construct $u\in H^{1}(n,m_{+},m_{-})$
such that 
\[
\left(\begin{array}{c}
u_{1/2}\left(\frac{1}{2}-\right)\\
u_{-\infty}\left(\frac{1}{2}-\right)
\end{array}\right)=\left(\begin{array}{c}
x\\
y
\end{array}\right)\text{ and }-M(x,y)=\left(\begin{array}{c}
u_{1/2}\left(-\frac{1}{2}+\right)\\
u_{\infty}\left(-\frac{1}{2}+\right)
\end{array}\right).
\]
Hence, 
\[
\left\{ \left(\begin{array}{c}
u_{1/2}\left(\frac{1}{2}-\right)\\
u_{-\infty}\left(\frac{1}{2}-\right)
\end{array}\right)\,;\,u\in\mathrm{dom}(D)\right\} =\mathbb{R}^{n}\times\mathbb{R}^{m_{-}}.
\]
As a consequence of this and the above computation, we deduce that
$v\in\mathrm{dom}(D^{*})$ if and only if $v\in H^{1}(n,m_{+},m_{-})$
and
\[
\left(\begin{array}{c}
v_{1/2}\left(\frac{1}{2}-\right)\\
v_{-\infty}\left(\frac{1}{2}-\right)
\end{array}\right)+M^{*}\left(\begin{array}{c}
v_{1/2}\left(-\frac{1}{2}+\right)\\
v_{\infty}\left(-\frac{1}{2}+\right)
\end{array}\right)=0,
\]
which establishes (b).

(c) At first we assume that $\mathrm{dom}(D)$ can be written as it
is given in (c). Then, by (a), $D$ is accretive. Moreover, by (b),
\[
\mathrm{dom}(D^{*})=\left\{ v=(v_{1/2},v_{\infty},v_{-\infty})\in H^{1}(n,m_{+},m_{-})\:;\:\left(\begin{array}{c}
v_{1/2}\left(\frac{1}{2}-\right)\\
v_{-\infty}\left(\frac{1}{2}-\right)
\end{array}\right)+M^{*}\left(\begin{array}{c}
v_{1/2}\left(-\frac{1}{2}+\right)\\
v_{\infty}\left(-\frac{1}{2}+\right)
\end{array}\right)=0\right\} ,
\]
which means that $D^{*}$ is accretive (note that with $M^{*}M\leq1$
we have $MM^{*}\leq1$ and also that $-\mathring{{\partial}}_x\subseteq D^{*}\subseteq-\partial_x$).
Since $D$ is closed and densely defined, it follows that $D$ is
maximal accretive. On the other hand, if $D$ is maximal accretive,
$D$ is accretive and therefore by (a), we find $M\in\mathbb{R}^{\left(n+m_{+}\right)\times\left(n+m_{-}\right)}$
with $M^{*}M\leq1$ such that 
\[
\mathrm{dom}(D)\subseteq\left\{ u=(u_{1/2},u_{\infty},u_{-\infty})\in H^{1}(n,m_{+},m_{-})\:;\:M\left(\begin{array}{c}
u_{1/2}\left(\frac{1}{2}-\right)\\
u_{-\infty}\left(\frac{1}{2}-\right)
\end{array}\right)+\left(\begin{array}{c}
u_{1/2}\left(-\frac{1}{2}+\right)\\
u_{\infty}\left(-\frac{1}{2}+\right)
\end{array}\right)=0\right\} .
\]
By the first part of the proof of (c), we have that $\partial_{x}$
restricted to the right-hand side of this inclusion is maximal accretive.
Hence, by the maximality of $D$, the inclusion is an equality, which
establishes the assertion.
\end{proof}
\begin{rem} 
The latter result is a special case of \cite[Theorem 2.1]{Waurick_Wegner_2020}.
\end{rem}

We remark here that the results in the section can be generalised also to infinite networks, that is one considers operators on $\oplus_{j\in J}L^2 (I_j)$ for an arbitrary index set $J$. Note, however, that in this case the mapping $\Psi$ in \prettyref{prop:tranformation_d_dn} (c) exists as a unitary mapping, only if $\inf_{j\in J} |I_j|>0$. For the general case we refer to \cite[Section 7]{Waurick_Wegner_2020}.

\section{The Congruence of $\partial_{x}$ and $P_{1}\partial_{x}\mathcal{H}$\label{sec:The-Congruence-of}}

Throughout, let $P_{1}\in\R^{N\times N}$ be a compatible selfadjoint
and invertible matrix and $\mathcal{H}:\R \to \R^{N\times N}$ measurable and bounded such that $\mathcal{H}(x)$ is compatible for each $x\in \R$ and $\mathcal{H}$ attains values in the symmetric matrices and is uniformly positive definite, i.e., there exists $c>0$ such that $\mathcal{H}(x)\geq c$ for all $x\in \R$. Moreover, we identify the function $\mathcal{H}$ with its induced multiplication operator on $\bigoplus_{k=1}^N L^2(I_k)$.\footnote{Note that it suffices to define $\mathcal{H}$ on $\bigcup_{k=1}^N I_k$. However, such a function can easily be extended to $\mathbb{R}$ by setting $\mathcal{H}(x)=I_N$ for $x\notin \bigcup_{k=1}^N I_k$.}  The aim of the present section is to identify
the operator $P_{1}\partial_{x}\mathcal{H}$ on a suitable Hilbert space and the operator $\partial_{x}$
on $L^{2}(n,m_{+},m_{-})$ as mutually congruent operators, if we
choose $n,m_{+},m_{-}\in\N$ suitably. The reduction to the case $\mathcal{H}=1$ is standard and well-known in the theory of port-Hamiltonian systems (see e.g. \cite[Lemma 7.2.3]{Jacob_Zwart_ISEM}).

\begin{prop}\label{prop:Hamiltonian_congruence}
 Let $H\coloneqq \bigoplus_{k=1}^N L^2(I_k)$ equipped with the inner product
 \[
  \langle u,v\rangle_H\coloneqq \langle \mathcal{H}u,  v\rangle \quad (u,v\in H).
 \]
 Consider the mapping $B: H\to \bigoplus_{k=1}^N L^2(I_k)$ given by $Bu\coloneqq \mathcal{H}u$. Then 
 \[
  B^\ast : \bigoplus_{k=1}^N L^2(I_k)\to H,\quad v\mapsto v,
 \]
where the adjoint is computed with repsect to the inner products on $H$ and $\bigoplus_{k=1}^N L^2(I_k)$.
\end{prop}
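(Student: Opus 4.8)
The plan is to deduce the claim straight from the definition of the Hilbert space adjoint, once we have convinced ourselves that $H$ genuinely is a Hilbert space so that this notion is available. First I would check that $\langle\cdot,\cdot\rangle_H$ is an inner product on $\bigoplus_{k=1}^N L^2(I_k)$: bilinearity is immediate, symmetry follows from $\mathcal{H}=\mathcal{H}^\ast$ (as a multiplication operator, since $\mathcal{H}(x)$ is symmetric for a.e. $x$), and definiteness from the uniform lower bound $\mathcal{H}\geq c$. Since $\mathcal{H}$ is in addition bounded, one has $c\|u\|^{2}\leq\|u\|_H^{2}\leq\|\mathcal{H}\|_\infty\|u\|^{2}$ for all $u$, so the $H$-norm is equivalent to the original norm; in particular $H$ is complete and hence a Hilbert space, and the identity map is a topological isomorphism between $H$ and $\bigoplus_{k=1}^N L^2(I_k)$ in both directions. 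The same estimates show that $B$ is a bounded (in fact boundedly invertible) operator from $H$ into $\bigoplus_{k=1}^N L^2(I_k)$, so that $B^\ast$ is a well-defined bounded operator from $\bigoplus_{k=1}^N L^2(I_k)$ to $H$.

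The actual identification of $B^\ast$ is then a one-line computation: for $u\in H$ and $v\in\bigoplus_{k=1}^N L^2(I_k)$ one has
\[
\langle Bu,v\rangle=\langle\mathcal{H}u,v\rangle=\langle u,v\rangle_H,
\]
where the first equality is the definition of $B$ and the second is precisely the definition of $\langle\cdot,\cdot\rangle_H$. By the defining property of the adjoint together with uniqueness of the representing element of a bounded functional on $H$, this forces $B^\ast v=v$; that is, $B^\ast$ is the canonical inclusion $v\mapsto v$ of $\bigoplus_{k=1}^N L^2(I_k)$ into $H$. For internal consistency one may note that this map is indeed bounded into $H$, since $\|v\|_H^{2}=\langle\mathcal{H}v,v\rangle\leq\|\mathcal{H}\|_\infty\|v\|^{2}$.

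I do not expect any real obstacle here; the only point needing care is the bookkeeping of which inner product is used on which side of the duality, and of the distinction between $H$ and $\bigoplus_{k=1}^N L^2(I_k)$ as topological vector spaces that happen to coincide as sets. The mildly counterintuitive outcome — that $B$ acts as multiplication by $\mathcal{H}$ while its adjoint acts as the identity — is exactly an artefact of the inner product on the domain space $H$ having been twisted by $\mathcal{H}$; indeed $\langle\cdot,\cdot\rangle_H$ is designed precisely so that multiplication by $\mathcal{H}$ becomes the adjoint of the inclusion. It is also worth remarking that symmetry of $\mathcal{H}$ is not actually used in the displayed identity itself, only (together with positive definiteness and boundedness) in establishing that $\langle\cdot,\cdot\rangle_H$ is an equivalent inner product in the first place.
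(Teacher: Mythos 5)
Your proposal is correct and follows essentially the same route as the paper: verify that $\langle\cdot,\cdot\rangle_H$ is a genuine (equivalent) inner product and that $B$ is bounded, then read off $B^\ast v=v$ from the identity $\langle Bu,v\rangle=\langle\mathcal{H}u,v\rangle=\langle u,v\rangle_H$. The extra remarks on norm equivalence and on which hypotheses are actually used are sound but not needed beyond what the paper records.
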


\begin{proof}
 First note that the inner product on $H$ is well-defined due to the selfadjointness, boundedness and positive definiteness of $\mathcal{H}$. Moreover, $B$ is obviously linear and bounded and for $u\in H, v\in \bigoplus_{k=1}^N L^2(I_k)$ we compute
 \begin{equation*}
  \langle Bu,v\rangle =\langle \mathcal{H}u,v\rangle =\langle u,v\rangle_H,
 \end{equation*}
which shows the asserted formula for $B^\ast$.
\end{proof}

The latter proposition shows that $P_1\partial_x \mathcal{H}$ and $P_1\partial_x$ are congruent as operators on $H$ and $\bigoplus_{k=1}^N L^2(I_k)$, respectively. Next we show that $P_1\partial_x$ and $\partial_x$ are congruent as well. The strategy is as follows.
First, we diagonalise $P_{1}$ and then apply our congruence result
\ref{prop:tranformation_d_dn} (c). After that, we use the reflection
operators $\sigma_{-1}$ given by $(\sigma_{-1}u)(x)\coloneqq u(-x)$
to obtain a congruent operator of the form $D\partial_{x}$ on $L^{2}(n,m_{+},m_{-})$,
where $D$ is a diagonal matrix with positive diagonal entries. Finally,
using $\sqrt{D^{-1}}$ we obtain the asserted congruence to $\partial_{x}.$
The precise statement is as follows. %

\begin{thm}
\label{thm:main_congruence}Let $P_{1}=P_{1}^{*}\in\mathbb{R}^{N\times N}$
invertible and compatible. Then there exists $n,m_{+},m_{-}\in\mathbb{N}_{0}$
with $n+m_{+}+m_{-}=N$ and an invertible operator $\mathcal{V}\colon\bigoplus_{k\in\{1,\ldots,N\}}L^{2}(I_{k})\to L^{2}(n,m_{+},m_{-})$
such that
\[
\mathcal{V}P_{1}\partial_{x}\mathcal{V^{*}}=\partial_{x}.
\]
More precisely, $\mathcal{V}$ is given as a product of constant matrices,
the operator $\Psi$ given in Proposition \ref{prop:tranformation_d_dn}
(c) and a diagonal operator consiting of identities and reflections
on the diagonal.
\end{thm}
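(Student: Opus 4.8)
The plan is to follow the three-step strategy announced in the paragraph preceding the statement, assembling $\mathcal{V}$ as a product of three congruences. First I would invoke the spectral/polar decomposition of the invertible self-adjoint matrix $P_1$: write $P_1 = |P_1|^{1/2} U_1 |P_1|^{1/2}$ with $U_1 = P_1|P_1|^{-1}$ a self-adjoint unitary (via functional calculus, $U_1^* = U_1 = U_1^{-1}$), hence $U_1 = W^* D_1 W$ for a unitary $W$ and a diagonal $D_1$ with entries in $\{-1,1\}$. Setting $K^* \coloneqq |P_1|^{1/2} W^*$ (invertible), we get $K^* P_1 \partial_x K = D_1 \partial_x$; here one must check the congruence is legitimate on the level of unbounded operators, i.e. that $K$ maps $\dom(P_1 \partial_x) = \bigoplus_k H^1(I_k)$ bijectively onto itself, which holds because $K$ is a constant matrix and the domain is matrix-invariant (compatibility is not even needed at this stage, or rather: $|P_1|^{1/2}$ and $W$ are built from $P_1$, so they preserve the same invariant subspaces and hence map $\bigoplus_k H^1(I_k)$ to itself). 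Then apply Proposition \ref{prop:tranformation_d_dn}(c) (using Remark \ref{rem:BlockdiagonalP1} if several $I_k$ coincide, or just the diagonal case as stated, since $D_1$ is diagonal) to obtain a unitary $\Psi$ with $\Psi D_1 \partial_x \Psi^* = \tilde D_1 \partial_{x,\mathrm{n}}$ on $L^2(n, m_+, m_-)$ where $n = \#N_{\mathrm f}$, $m_+ = \#M_+$, $m_- = \#M_-$, and $\tilde D_1 = \sqrt{(\mathbf{b-a})}\, D_1 \sqrt{(\mathbf{b-a})}$ is diagonal and invertible with real (nonzero) entries.

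Second, I would normalise the signs. After the first two steps we have a congruent operator $\tilde D_1 \partial_{x,\mathrm n}$ with $\tilde D_1 = \mathrm{diag}(d_1,\ldots,d_N)$, $d_k \ne 0$. Apply the block-diagonal reflection operator $\Sigma \coloneqq \mathrm{diag}(\sigma_1, \ldots, \sigma_N)$ where $\sigma_k = \sigma_{-1}$ (the operator $(\sigma_{-1}u)(x) = u(-x)$) on those components $k$ with $d_k < 0$ and $\sigma_k = \identity$ otherwise. The key point is that $\sigma_{-1}$ anticommutes with $\partial_x$ and maps each reference interval ($]-1/2,1/2[$, $]-1/2,\infty[$, $]-\infty,1/2[$) — well, here is the one genuine subtlety: $\sigma_{-1}$ maps $]-1/2,\infty[$ onto $]-\infty,1/2[$, so it swaps an $M_+$-coordinate with an $M_-$-slot. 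This is fine as long as we track it bookkeeping-wise: $\Sigma$ is still unitary from $L^2(n,m_+,m_-)$ onto $L^2(n, m_+', m_-')$ for the possibly-different values $m_+', m_-'$ obtained after the swaps, with $n + m_+' + m_-' = N$ unchanged. With this, $\Sigma \tilde D_1 \partial_{x,\mathrm n} \Sigma^* = |\tilde D_1| \partial_{x, \mathrm n}$, a diagonal matrix with strictly positive entries times the derivative. Third and finally, conjugate by the constant diagonal matrix $\sqrt{|\tilde D_1|^{-1}}$ (a congruence, not unitary, but invertible), which turns $|\tilde D_1| \partial_{x,\mathrm n}$ into $\partial_{x,\mathrm n}$. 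Composing, $\mathcal{V} \coloneqq \sqrt{|\tilde D_1|^{-1}}\, \Sigma\, \Psi\, K$ (reading right to left, with the understanding that $K$ here denotes the operator version of the constant matrix) is invertible and satisfies $\mathcal{V} P_1 \partial_x \mathcal{V}^* = \partial_x$, and by construction it is a product of constant matrices, the unitary $\Psi$, and a diagonal operator of identities and reflections, as claimed.

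I should be careful throughout that "congruence" for unbounded operators means the stated identity holds \emph{with domains}, i.e. $\mathcal{V}$ restricts to a bijection $\dom(P_1 \partial_x) \to \dom(\partial_{x,\mathrm n}) = H^1(n,m_+,m_-)$ and the operator identity holds on that domain; each of the three building blocks ($K$, $\Psi$, $\Sigma$, and the final diagonal scaling) manifestly has this property because each either is a constant invertible matrix acting componentwise (hence preserving Sobolev regularity and mapping $H^1$-type domains to $H^1$-type domains) or is $\Psi$, whose domain behaviour is exactly the content of Proposition \ref{prop:tranformation_d_dn}(c), or is a reflection, which maps $H^1$ of an interval isometrically onto $H^1$ of the reflected interval. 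The one step requiring genuine care — and the only thing I'd call an obstacle rather than routine — is the sign-normalisation reflection step: one must verify that after the reflections the three families of reference intervals are again exactly of the three allowed types (so that the target space is genuinely some $L^2(n, m_+', m_-')$), keeping precise track of how $m_+$ and $m_-$ get permuted, and one must double-check that $\sigma_{-1}$ does indeed satisfy $\sigma_{-1}\partial_x = -\partial_x \sigma_{-1}$ on the relevant domains (immediate from the chain rule for the weak derivative, but worth stating). Everything else — unitarity of $\Phi$'s and $\Psi$, the arithmetic of the diagonal scalings, invertibility of the composite — is bookkeeping already supplied by Proposition \ref{prop:tranformation_d_dn} and the standard reduction $\mathcal{H} = 1$ recorded in Proposition \ref{prop:Hamiltonian_congruence}.
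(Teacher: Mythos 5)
Your proposal is correct and follows essentially the same three-step architecture as the paper's proof: reduce $P_1\partial_x$ to $D_1\partial_x$ with $D_1$ diagonal, apply Proposition \ref{prop:tranformation_d_dn}(c), then normalise signs by coordinatewise reflections (with the attendant swap of $M_+$- and $M_-$-slots and a reordering) and rescale by the inverse square root of the resulting positive diagonal matrix. The only deviation is in the first step: the paper simply takes a unitary $K$ with $KP_1K^*=D_1$ ($D_1$ diagonal with nonzero real entries), whereas you route through the polar decomposition $P_1=\sqrt{|P_1|}\,W^*D_1W\sqrt{|P_1|}$ with $D_1$ having entries in $\{-1,1\}$; this works too, but note that with $K^*\coloneqq\sqrt{|P_1|}\,W^*$ one gets $K^*(D_1\partial_x)K=P_1\partial_x$, so for the composite $\mathcal{V}=\sqrt{|\tilde D_1|^{-1}}\,\Sigma\,\Psi\,K$ to telescope you must take $K\coloneqq\bigl(\sqrt{|P_1|}\,W^*\bigr)^{-1}=W|P_1|^{-1/2}$ rather than the matrix you named --- a trivial bookkeeping fix.
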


\begin{proof}
Since $P_{1}$ is selfadjoint, we find a unitary matrix $K\in\R^{N\times N}$
such that 
\[
KP_{1}K^{\ast}=D_{1},
\]
where $D_{1}\in\R^{N\times N}$ is a diagonal matrix, whose diagonal
entries are nonzero thanks to the invertibility of $P_{1}.$ By Proposition
\ref{prop:tranformation_d_dn} (c), we find a unitary mapping $\Psi\colon\bigoplus_{k\in\{1,\ldots,N\}}L^{2}(I_{k})\to L^{2}(n,\tilde{m}_{+},\tilde{m}_{-})$
with suitable $n,\tilde{m}_{+},\tilde{m}_{-}\in\mathbb{N}_{0}$ such
that $n+\tilde{m}_{+}+\tilde{m}_{-}=N$ and a diagonal matrix with
real, non-zero entries $\tilde{D}_{1}$ such that
\begin{align*}
\Psi D_{1}\partial_{x}\Psi^{*} & =\tilde{D}_{1}\partial_{x}.
\end{align*}

Then, $\tilde{D}_{1}$ can be written according to the latter block
decomposition with diagonal matrices $\tilde{D}_{1,\ell}\in\mathbb{R}^{\ell\times\ell}$
for $\ell\in\{n,\tilde{m}_{+},\tilde{m}_{-}\}$
\[
\left(\begin{array}{ccc}
\tilde{D}_{1,n} & 0 & 0\\
0 & \tilde{D}_{1,\tilde{m}_{+}} & 0\\
0 & 0 & \tilde{D}_{1,\tilde{m}_{-}}
\end{array}\right).
\]
Let $n^{+}$ be the number of positive numbers in the diagonal of
$\tilde{D}_{1,n},$ $n^{-}\coloneqq n-n^{+}$. Similarly we define
$\tilde{m}_{\pm}^{\pm}$ . Next, we define an operator $W:\bigoplus_{k=1}^{N}L^{2}(\R)\to\bigoplus_{k=1}^{N}L^{2}(\R)$
acting coordinate wise by 
\begin{align*}
\left(W_{k}u\right)(x) & \coloneqq\begin{cases}
u(x) & \text{if }\tilde{D}_{1,kk}>0,\\
u(-x) & \text{if }\tilde{D}_{1,kk}<0
\end{cases}\quad(x\in\R)
\end{align*}
for each $u\in L^{2}(\R)$ and $k\in\{1,\ldots,N\}.$ Moreover, it
is clear that there exists a permutation matrix $Q\in\R^{N\times N}$
such that $QW[L^{2}(n,\tilde{m}_{+},\tilde{m}_{-})]\subseteq L^{2}(n,m_{+},m_{-}),$
where $m_{+}\coloneqq\tilde{m}_{+}^{+}+\tilde{m}_{-}^{-},\:m_{-}\coloneqq\tilde{m}_{+}^{-}+\tilde{m}_{-}^{+}$.
Setting $V\coloneqq QW:L^{2}(n,\tilde{m}_{+},\tilde{m}_{-})\to L^{2}(n,m_{+},m_{-})$
we infer
\[
V\left(\tilde{D}_{1}\partial_{x}\right)V^{\ast}=D_{2}\partial_{x}
\]
on the Hilbert space $L^{2}(n,m_{+},m_{-})$ where $D_{2}\in\R^{N\times N}$
is a diagonal matrix with strictly positive diagonal entries. With
these transformation at hand, we obtain 
\begin{align*}
\partial_{x} & =\left(\sqrt{D_{2}}\right)^{-1}D_{2}\partial_{x}\left(\sqrt{D_{2}}\right)^{-1}\\
 & =\left(\sqrt{D_{2}}\right)^{-1}V\left(\tilde{D}_{1}\partial_{x}\right)V^{\ast}\left(\sqrt{D_{2}}\right)^{-1}\\
 & =\left(\sqrt{D_{2}}\right)^{-1}V\Psi(D_{1}\partial_{x})\Psi^{*}V^{\ast}\left(\sqrt{D_{2}}\right)^{-1}\\
 & =\left(\sqrt{D_{2}}\right)^{-1}V\Psi K(P_{1}\partial_{x})K^{\ast}\Psi^{*}V^{\ast}\left(\sqrt{D_{2}}\right)^{-1},
\end{align*}
thus the assertion follows with $\mathcal{V\coloneqq}\left(\sqrt{D_{2}}\right)^{-1}V\Psi K.$
\end{proof}
\begin{rem}[Boundary Conditions for $P_{1}\partial_{x}$]
 Note that a closer inspection of the proof of Theorem \ref{thm:main_congruence}
reveals that also 
\[
\mathcal{V}P_{1}\mathring{\partial}_{1}\mathcal{V}^{*}=\mathring{\partial}_{1},
\]
where the $\ \mathring{}\negmedspace$~ stands for the realisation
of $\partial_{x}$ restricted to $\bigoplus_{k\in\{1,\ldots,N\}}H_{0}^{1}(I_{k})$
in the former and to $H_{0}^{1}(n,m_{+},m_{-})$ in the latter case.
The reason for this is that the transformation $\mathcal{V}$ maps
smooth compactly supported functions into smooth compactly supported
functions. In particular, this means that any choice of linear boundary
conditions for $P_{1}\partial_{x}$ is in one-to-one correspondence
to a boundary condition for $\partial_{x}$. We have classified all
boundary conditions for $\partial_{x}$ leading to an m-accretive
operator realisation of $\partial_{x}$. Hence, we obtain a complete
description of all boundary conditions for $P_{1}\partial_{x}$ leading
to m-accretive operator realisations as a consequence of Theorem \ref{thm:main_congruence}.
We shall see in Lemma \ref{lem:transformation_bd_value} below that
the particular form of the congruence is not important. In fact, it
turns out that boundary conditions for $P_{1}\partial_{x}$ can be
rephrased into boundary conditions for $\partial_{x}$ as long as
$\mathcal{V}P_{1}\partial_{x}\mathcal{V}^{*}=\partial_{x}$ for \emph{any}
invertible operator $\mathcal{V}.$ 
\end{rem}

We conclude this section by looking at a variant of the above result
under the assumption 
\begin{equation}
-\infty<\inf_{k\in\{1,\ldots,N\}}I_{k}<\sup_{k\in\{1,\ldots,N\}}I_{k}<\infty\label{eq:finiteInterv}
\end{equation}
of finiteness of all intervals. Then $n=N,m_{+}=m_{-}=0$ in Theorem
\ref{thm:main_congruence}. We first provide another representation
of $\partial_{x}$ on $L^{2}(N,0,0)=L^{2}(]-1/2,1/2[)^{N}$. It will
be instrumental of the proof that the derivative of an odd function
is even and that the derivative of an even function is odd.

\begin{thm}[{\cite[p. 61-62]{Picard2013_mother} and \cite[p. 2811-2812]{Trostorff2014}}]
 Let $\partial_{x}\colon H^{1}(]-1/2,1/2[)^{N}\subseteq L^{2}(]-1/2,1/2[)^{N}\to L^{2}(]-1/2,1/2[)^{N}.$
We define 
\begin{align*}
\iota_{\textnormal{e}}\colon L_{\textnormal{e}}^{2}(]-1/2,1/2[)^{N} & \to L^{2}(]-1/2,1/2[)^{N}\\
\iota_{\textnormal{o}}\colon L_{\textnormal{o}}^{2}(]-1/2,1/2[)^{N} & \to L^{2}(]-1/2,1/2[)^{N}
\end{align*}
the canonical embeddings from the (component-wise) even and odd functions
on $L^{2}(]-1/2,1/2[)$ into $L^{2}(]-1/2,1/2[)$. Then
\[
\left(\begin{array}{c}
\iota_{\textnormal{e}}^{*}\\
\iota_{\textnormal{o}}^{*}
\end{array}\right)\partial_{x}\left(\begin{array}{cc}
\iota_{\textnormal{e}} & \iota_{\textnormal{o}}\end{array}\right)=\left(\begin{array}{cc}
0 & \partial_{x,\textnormal{o}}\\
\partial_{x,\textnormal{e}} & 0
\end{array}\right),
\]
where 
\begin{align*}
\partial_{x,\textnormal{e}}\colon H_{\textnormal{e}}^{1}(]-1/2,1/2[)^{N}\subseteq L_{\textnormal{e}}^{2}(]-1/2,1/2[)^{N} & \to L_{\textnormal{o}}^{2}(]-1/2,1/2[)^{N}\\
\phi & \mapsto\phi'
\end{align*}
and $H_{\textnormal{e}}^{1}(]-1/2,1/2[)^{N}\coloneqq H^{1}(]-1/2,1/2[)^{N}\cap L_{\textnormal{e}}^{2}(]-1/2,1/2[)^{N}$;
similarly $\partial_{x,\textnormal{o}}$ and $H_{\textnormal{o}}^{1}(]-1/2,1/2[)^{N}$. 
\end{thm}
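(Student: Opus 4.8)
The plan is to realise the even--odd splitting as a unitary change of basis and then to exploit how $\partial_{x}$ interacts with the reflection $\sigma_{-1}\colon f\mapsto f(-\cdot)$. First I would record that $L^{2}(]-1/2,1/2[)=L_{\mathrm{e}}^{2}(]-1/2,1/2[)\oplus L_{\mathrm{o}}^{2}(]-1/2,1/2[)$ is an orthogonal decomposition: every $f$ splits as $f=\tfrac12(f+\sigma_{-1}f)+\tfrac12(f-\sigma_{-1}f)$ into an even and an odd part, and the substitution $x\mapsto-x$ shows $\langle g,h\rangle=0$ whenever $g$ is even and $h$ is odd. Taking $N$ copies componentwise, it follows that $\begin{pmatrix}\iota_{\mathrm{e}}&\iota_{\mathrm{o}}\end{pmatrix}\colon L_{\mathrm{e}}^{2}(]-1/2,1/2[)^{N}\oplus L_{\mathrm{o}}^{2}(]-1/2,1/2[)^{N}\to L^{2}(]-1/2,1/2[)^{N}$ is unitary with inverse $\begin{pmatrix}\iota_{\mathrm{e}}^{*}\\\iota_{\mathrm{o}}^{*}\end{pmatrix}$, that $\iota_{\mathrm{e}}^{*}\iota_{\mathrm{e}}=1$, $\iota_{\mathrm{o}}^{*}\iota_{\mathrm{o}}=1$, $\iota_{\mathrm{e}}^{*}\iota_{\mathrm{o}}=0=\iota_{\mathrm{o}}^{*}\iota_{\mathrm{e}}$, and that $P_{\mathrm{e}}\coloneqq\iota_{\mathrm{e}}\iota_{\mathrm{e}}^{*}=\tfrac12(1+\sigma_{-1})$ and $P_{\mathrm{o}}\coloneqq\iota_{\mathrm{o}}\iota_{\mathrm{o}}^{*}=\tfrac12(1-\sigma_{-1})$ are the associated orthogonal projections.

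Next I would establish the commutation relation with $\partial_{x}$. The chain rule for weak derivatives gives $(\sigma_{-1}f)'=-\sigma_{-1}(f')$, so $\sigma_{-1}$ maps $H^{1}(]-1/2,1/2[)^{N}$ bijectively onto itself and $\partial_{x}\sigma_{-1}=-\sigma_{-1}\partial_{x}$ on $H^{1}(]-1/2,1/2[)^{N}$. In particular $\sigma_{-1}$ leaves $\dom(\partial_{x})=H^{1}(]-1/2,1/2[)^{N}$ invariant, so that $H^{1}(]-1/2,1/2[)^{N}=H_{\mathrm{e}}^{1}(]-1/2,1/2[)^{N}\oplus H_{\mathrm{o}}^{1}(]-1/2,1/2[)^{N}$ via $P_{\mathrm{e}},P_{\mathrm{o}}$; and from $\partial_{x}P_{\mathrm{e}}=\tfrac12(\partial_{x}-\sigma_{-1}\partial_{x})=P_{\mathrm{o}}\partial_{x}$ together with the symmetric identity $\partial_{x}P_{\mathrm{o}}=P_{\mathrm{e}}\partial_{x}$ one reads off that $\partial_{x}$ maps even $H^{1}$-functions to odd $L^{2}$-functions and odd ones to even ones. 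This is precisely the assertion that $\partial_{x,\mathrm{e}}$ and $\partial_{x,\mathrm{o}}$ are well defined with the stated ranges.

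Finally I would assemble the block form. For $\phi_{\mathrm{e}}\in H_{\mathrm{e}}^{1}(]-1/2,1/2[)^{N}$ and $\phi_{\mathrm{o}}\in H_{\mathrm{o}}^{1}(]-1/2,1/2[)^{N}$ set $u\coloneqq\iota_{\mathrm{e}}\phi_{\mathrm{e}}+\iota_{\mathrm{o}}\phi_{\mathrm{o}}\in H^{1}(]-1/2,1/2[)^{N}$; by the previous step $\partial_{x}u=\iota_{\mathrm{o}}\,\partial_{x,\mathrm{e}}\phi_{\mathrm{e}}+\iota_{\mathrm{e}}\,\partial_{x,\mathrm{o}}\phi_{\mathrm{o}}$, the first summand odd and the second even. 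Applying $\iota_{\mathrm{e}}^{*}$ and $\iota_{\mathrm{o}}^{*}$ and using the orthonormality relations from the first paragraph yields $\begin{pmatrix}\iota_{\mathrm{e}}^{*}\\\iota_{\mathrm{o}}^{*}\end{pmatrix}\partial_{x}\begin{pmatrix}\iota_{\mathrm{e}}&\iota_{\mathrm{o}}\end{pmatrix}\begin{pmatrix}\phi_{\mathrm{e}}\\\phi_{\mathrm{o}}\end{pmatrix}=\begin{pmatrix}\partial_{x,\mathrm{o}}\phi_{\mathrm{o}}\\\partial_{x,\mathrm{e}}\phi_{\mathrm{e}}\end{pmatrix}$, which is the asserted operator identity on the algebraic level; and the domain of the operator on the left equals $H_{\mathrm{e}}^{1}(]-1/2,1/2[)^{N}\oplus H_{\mathrm{o}}^{1}(]-1/2,1/2[)^{N}$ because, again by the invariance of $H^{1}$ under $\sigma_{-1}$, one has $u\in H^{1}(]-1/2,1/2[)^{N}$ if and only if $P_{\mathrm{e}}u,P_{\mathrm{o}}u\in H^{1}(]-1/2,1/2[)^{N}$. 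The only point that needs genuine care --- and hence the main obstacle --- is the weak-derivative identity $(\sigma_{-1}f)'=-\sigma_{-1}(f')$ and the accompanying domain bookkeeping; once these are in place, everything else is a short algebraic computation.
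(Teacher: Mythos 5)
Your proof is correct and follows precisely the route the paper intends: the paper cites this result from the literature without reproducing a proof, remarking only that the key fact is that differentiation exchanges even and odd functions, which is exactly the anticommutation relation $\partial_{x}\sigma_{-1}=-\sigma_{-1}\partial_{x}$ at the heart of your argument. The remaining bookkeeping with $P_{\mathrm{e}}$, $P_{\mathrm{o}}$ and the domains is carried out correctly.
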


\begin{rem}
Note that the previous theorem permits to compute the form of the
boundary conditions for $\mathring{\partial}_{x}\subseteq A\subseteq\partial_{x}$
maximal accretive in terms of restrictions and extensions of $\left(\begin{array}{cc}
0 & \partial_{x,\textnormal{o}}\\
\partial_{x,\textnormal{e}} & 0
\end{array}\right)$. Indeed, let $\mathring{\partial}_{x}\subseteq A\subseteq\partial_{x}$
be maximal accretive and let $M\in\mathbb{R}^{N\times N}$ be such
that 
\[
\mathrm{\mathrm{dom}}(A)=\{u\in H^{1}(]-1/2,1/2[)^{N}\,;\,Mu(\frac{1}{2}-)+u(-\frac{1}{2}+)=0\}.
\]
Note that a small computation (and invariance of smooth compactly
supported functions) shows that 
\[
\left(\begin{array}{c}
\iota_{\textnormal{e}}^{*}\\
\iota_{\textnormal{o}}^{*}
\end{array}\right)\mathring{\partial}_{x}\left(\begin{array}{cc}
\iota_{\textnormal{e}} & \iota_{\textnormal{o}}\end{array}\right)=\left(\begin{array}{cc}
0 & \mathring{\partial}_{x,\textnormal{o}}\\
\mathring{\partial}_{x,\textnormal{e}} & 0
\end{array}\right).
\]
Also note that 
\begin{align*}
u(\frac{1}{2}-) & =\left(\iota_{\textnormal{e}}\iota_{\textnormal{e}}^\ast u+\iota_{\textnormal{o}}\iota_{\textnormal{o}}^\ast u\right)(\frac{1}{2}-)=\left(\iota_{\textnormal{e}}\iota_{\textnormal{e}}^\ast u\right)(\frac{1}{2}-)+\left(\iota_{\textnormal{o}}\iota_{\textnormal{o}}^\ast u\right)(\frac{1}{2}-),\\
u(-\frac{1}{2}+) & =\left(\iota_{\textnormal{e}}\iota_{\textnormal{e}}^\ast u+\iota_{\textnormal{o}}\iota_{\textnormal{o}}^\ast u\right)(-\frac{1}{2}+)=\left(\iota_{\textnormal{e}}\iota_{\textnormal{e}}^\ast u\right)(\frac{1}{2}-)-\left(\iota_{\textnormal{o}}\iota_{\textnormal{o}}^\ast u\right)(\frac{1}{2}-).
\end{align*}
Hence, 
\[
\mathrm{\mathrm{dom}}(A)=\left\{ u\in H^{1}(]-1/2,1/2[)^{N};M\left(\left(\iota_{\textnormal{e}}\iota_{\textnormal{e}}^\ast u\right)(\frac{1}{2}-)+\left(\iota_{\textnormal{o}}\iota_{\textnormal{o}}^\ast u\right)(\frac{1}{2}-)\right)+\left(\iota_{\textnormal{e}}\iota_{\textnormal{e}}^\ast u\right)(\frac{1}{2}-)-\left(\iota_{\textnormal{o}}\iota_{\textnormal{o}}^\ast u\right)(\frac{1}{2}-)=0\right\} .
\]
Thus, $\tilde{A}\coloneqq\left(\begin{array}{c}
\iota_{\textnormal{e}}^{*}\\
\iota_{\textnormal{o}}^{*}
\end{array}\right)A\left(\begin{array}{cc}
\iota_{\textnormal{e}} & \iota_{\textnormal{o}}\end{array}\right)$ is a m-accretive restriction of $\left(\begin{array}{cc}
0 & \partial_{x,\textnormal{o}}\\
\partial_{x,\textnormal{e}} & 0
\end{array}\right)$ with domain
\[
\mathrm{dom}(\tilde{A})=\left\{ \left(\begin{array}{c}
u_{\textnormal{e}}\\
u_{\textnormal{o}}
\end{array}\right)\in H_{\textnormal{e}}^{1}(]-1/2,1/2[)^{N}\oplus H_{\textnormal{o}}^{1}(]-1/2,1/2[)^{N};\left(\begin{array}{cc}
M+I & M-I\end{array}\right)\left(\begin{array}{c}
u_{\textnormal{e}}(1/2-)\\
u_{\textnormal{o}}(1/2-)
\end{array}\right)=0\right\} .
\]
This leads to an alternative proof of one equivalence in \cite[Theorem 7.2.4]{Jacob_Zwart_ISEM}
(pay also attention to \cite[Lemma 7.3.1]{Jacob_Zwart_ISEM}).
\end{rem}

\section{Well-posedness of port-Hamiltonian Boundary Control Systems}\label{sec:well-posedness}

In this section, we shall have a closer look at port-Hamiltonian systems in the simplier case $I_1=\ldots=I_N=]a,b[$ for some $a,b\in \R$. Note that then $n=N,m_+=m_-=0$ and $L^2(N,0,0)=L^2(]-1/2,1/2[)^N=L^2(]-1/2,1/2[;\R^N)$. 
In fact, because of the structure theorem, which allows us to represent any
port-Hamiltonian system as $\partial_{x}$, we are in a position to
describe a rather general class of port-Hamiltonian systems and discuss
related issues like well-posedness of associated boundary control
systems. The foundation
of this lies in the solution theory for evolutionary equations, see
e.g. \cite{Picard2009,Picard_McGhee2011,PTW2015_survey,ISEM2020}.

We briefly recall the setting of \cite[Section 5]{Jacob_Zwart_2018}
(see also \cite[Section 7]{Jacob_Zwart_ISEM}). For this, let $a,b\in\mathbb{R}$,
$a<b$, $P_{1}=P_{1}^{*}\in\mathbb{R}^{N\times N}$ invertible, $P_{0}=-P_{0}^{*}\in\mathbb{R}^{N\times N},$
$\mathcal{H}\in L^{\infty}([a,b];\mathbb{R}^{N\times N})$. Assume
there exists $m,M\in\mathbb{R}_{>0}$ such that 
\[
mI_{N\times N}\leq\mathcal{H}(\zeta)=\mathcal{H}(\zeta)^{*}\leq MI_{N\times N}\quad(\text{a.e. }\zeta\in[a,b]).
\]
Let
\begin{align*}
A\colon\mathrm{dom}(A)\subseteq L_{\mathcal{H}}^{2}(]a,b[)^{N} & \to L_{\mathcal{H}}^{2}(]a,b[)^{N}\\
x & \mapsto P_{1}\left(\mathcal{H}x\right)'+P_{0}\mathcal{H}x,
\end{align*}
where
\begin{equation*}
\dom(\mathring{\partial}_x\mathcal{H})\subseteq \mathrm{dom}(A) \subseteq \dom(\partial_x \mathcal{H})   \quad \text{ and } \quad
L_{\mathcal{H}}^{2}(]a,b[)^{N}  \coloneqq(L^{2}(]a,b[)^{N};\langle\cdot,\mathcal{H}\cdot\rangle_{L^{2}(]a,b[)^{N}}).
\end{equation*}
In order to have a meaningful notion of well-posedness, classically,
people focus on the the generator propertiesof $A$. Generating a ($C_{0}$-)semigroup
of contractions can be characterised as the closed densely defined operator being  m-dissipative,  by
the Lumer--Philipps Theorem. In the particular case of port-Hamiltonian
systems this characterisation can be reformulated in terms of the
boundary conditions parametrised by some matrix $W_B$, see e.g.~\cite[Theorem 1]{Jacob_Morris_Zwart2015}
or \cite[Theorem 5.8]{Jacob_Zwart_2018}: 
\begin{thm}
\label{thm:well_posedness}The following conditions are equivalent:

\begin{enumerate}[(i)]

\item $-A$ generates a semigroup of contractions on $L_{\mathcal{H}}^{2}(]a,b[)^{N}$;
that is, $A$ is maximal accretive;

\item $A$ is accretive and there exists $W_B\in \R^{N\times 2N}$ such that 
\[
 \dom(A)=\left\{x\in L^2(]a,b[)^N\,;\,  \mathcal{H}x\in H^1(]a,b[)^N,\, W_B \begin{pmatrix}
                                                                             (\mathcal{H}x)(b)\\
                                                                             (\mathcal{H}x)(a)
                                                                            \end{pmatrix}=0 \right\}
\]

\item there exists $W_{B}\in \R^{N\times 2N}$ such that $\dom(A)$ is given as in (ii) and $W_B$  has rank $N$ and, in the sense of positive definiteness,
\[
W_{B}\left(\begin{array}{cc}
-P_{1} & P_{1}\\
I_{N\times N} & I_{N\times N}
\end{array}\right)^{-1}\left(\begin{array}{cc}
0 & I_{N\times N}\\
I_{N\times N} & 0
\end{array}\right)\left(W_{B}\left(\begin{array}{cc}
-P_{1} & P_{1}\\
I_{N\times N} & I_{N\times N}
\end{array}\right)^{-1}\right)^{*}\geq0.
\]

\end{enumerate}

If any of the above holds, then there exists an invertible matrix $C\in\R^{2N\times2N}$
and matrices $M,L\in\R^{N\times N}$ with $M^{\ast}M\leq1$ and $L$
invertible such that 
\[
W_{B}C=L\left(\begin{array}{cc}
M & 1\end{array}\right).
\]
\end{thm}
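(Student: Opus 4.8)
The plan is to combine the structural reduction of Sections~\ref{sec:network}--\ref{sec:The-Congruence-of} with the classification of $m$-accretive realisations of $\partial_{x}$ in \prettyref{thm:char_accretivity}. First I would remove $\mathcal{H}$ and $P_{0}$. Writing $Ax=P_{1}(\mathcal{H}x)'+P_{0}\mathcal{H}x$, \prettyref{prop:Hamiltonian_congruence} shows that $x\mapsto P_{1}(\mathcal{H}x)'$ on $L_{\mathcal{H}}^{2}(]a,b[)^{N}$ is congruent (via the map that is the identity on functions but intertwines the $\mathcal{H}$-inner product with the standard one) to $P_{1}\partial_{x}$ on $L^{2}(]a,b[)^{N}$; under this congruence $x\in\dom(A)$ corresponds to $v\coloneqq\mathcal{H}x$ with boundary data $(v(b),v(a))$ left untouched, and $P_{0}\mathcal{H}$ turns into $P_{0}$, a bounded skew-selfadjoint operator. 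Since adding a bounded skew-selfadjoint operator leaves accretivity unchanged (its numerical range is imaginary) and, applied to the adjoint as well, also maximal accretivity, $A$ is accretive resp.\ maximal accretive iff the corresponding restriction of $P_{1}\partial_{x}$ is. Now \prettyref{thm:main_congruence} with $I_{1}=\dots=I_{N}=]a,b[$ (so $n=N$, $m_{\pm}=0$ and $L^{2}(N,0,0)=L^{2}(]-1/2,1/2[)^{N}$) produces an invertible $\mathcal{V}\colon L_{\mathcal{H}}^{2}(]a,b[)^{N}\to L^{2}(]-1/2,1/2[)^{N}$ with $\mathcal{V}(P_{1}\partial_{x}\mathcal{H})\mathcal{V}^{*}=\partial_{x}$, $\mathcal{V}(P_{1}\mathring{\partial}_{x}\mathcal{H})\mathcal{V}^{*}=\mathring{\partial}_{x}$ and $\mathcal{V}(P_{0}\mathcal{H})\mathcal{V}^{*}$ still bounded skew-selfadjoint. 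As $\mathcal{V}$ is assembled from an orthogonal matrix, the unitary rescalings of \prettyref{prop:tranformation_d_dn}, coordinatewise reflections $x\mapsto -x$, and a positive diagonal matrix, it restricts to a bijection of $H^{1}(]a,b[)^{N}$ onto $H^{1}(]-1/2,1/2[)^{N}$ (and of $H_{0}^{1}$ onto $H_{0}^{1}$), and there is an invertible $R\in\mathbb{R}^{2N\times2N}$ with $\bigl(u(\tfrac12-),u(-\tfrac12+)\bigr)=R\bigl(v(b),v(a)\bigr)$ whenever $u=\mathcal{V}x$, $v=\mathcal{H}x$; the reflections are exactly what swap the two endpoints in the coordinates where the diagonalisation of $P_{1}$ has a negative eigenvalue.

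With these identifications the hypothesis $\dom(\mathring{\partial}_{x}\mathcal{H})\subseteq\dom(A)\subseteq\dom(\partial_{x}\mathcal{H})$ becomes $\mathring{\partial}_{x}\subseteq\partial_{x}|_{\mathcal{V}[\dom A]}\subseteq\partial_{x}$, so \prettyref{thm:char_accretivity} applies with $n=N$, $m_{\pm}=0$. For $\text{(ii)}\Rightarrow\text{(i)}$: if $\dom(A)$ is described by some $W_{B}\in\mathbb{R}^{N\times2N}$ and $A$ is accretive, then \prettyref{thm:char_accretivity}(a) yields $M$ with $M^{*}M\le1$ and, since boundary values of $H^{1}$-functions exhaust $\mathbb{R}^{2N}$, $\ker(W_{B}R^{-1})\subseteq\ker(M\mid I)$; the latter has dimension $N$ whereas $\dim\ker(W_{B}R^{-1})=2N-\operatorname{rank}W_{B}\ge N$, forcing $\operatorname{rank}W_{B}=N$ and equality of the kernels. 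Hence $\mathcal{V}[\dom A]$ is exactly of the form in \prettyref{thm:char_accretivity}(c), so $\partial_{x}|_{\mathcal{V}[\dom A]}$, and therefore $A$, is maximal accretive. For $\text{(i)}\Rightarrow\text{(ii)}$ and the final statement: if $A$ is maximal accretive, \prettyref{thm:char_accretivity}(c) provides $M$ with $M^{*}M\le1$ and $\mathcal{V}[\dom A]=\{u\colon Mu(\tfrac12-)+u(-\tfrac12+)=0\}$; pulling back, $\dom(A)$ is of the form in~(ii) with $W_{B}=(M\mid I)R$, which has rank $N$. For any rank-$N$ matrix $W_{B}$ describing $\dom(A)$ one then has $\ker(W_{B}R^{-1})=\ker(M\mid I)$, and two matrices in $\mathbb{R}^{N\times2N}$ of rank $N$ with the same kernel differ by an invertible left factor, so $W_{B}R^{-1}=L(M\mid I)$ for some invertible $L\in\mathbb{R}^{N\times N}$; this is the claim with $C\coloneqq R^{-1}$.

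Finally I would bring in~(iii) by an explicit boundary balance, independent of the reference operator. Integration by parts and $P_{0}=-P_{0}^{*}$ give, for $x\in\dom(\partial_{x}\mathcal{H})$ and $v=\mathcal{H}x$, that $2\Re\langle Ax,x\rangle_{L_{\mathcal{H}}^{2}}=\langle P_{1}v(b),v(b)\rangle-\langle P_{1}v(a),v(a)\rangle$, which after the invertible substitution $\binom{c}{d}=\bigl(\begin{smallmatrix}-P_{1}&P_{1}\\ I&I\end{smallmatrix}\bigr)\binom{v(b)}{v(a)}$ equals $-\langle c,d\rangle=-\tfrac12\binom{c}{d}^{*}\bigl(\begin{smallmatrix}0&I\\ I&0\end{smallmatrix}\bigr)\binom{c}{d}$. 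Thus, with $\widetilde{W}\coloneqq W_{B}\bigl(\begin{smallmatrix}-P_{1}&P_{1}\\ I&I\end{smallmatrix}\bigr)^{-1}$, accretivity of $A$ amounts to $\bigl(\begin{smallmatrix}0&I\\ I&0\end{smallmatrix}\bigr)$ being negative semidefinite on $\ker\widetilde{W}$; transforming $\bigl(\begin{smallmatrix}0&I\\ I&0\end{smallmatrix}\bigr)$ orthogonally to $\operatorname{diag}(I,-I)$ and using that a maximal negative-semidefinite subspace is the graph of a contraction, one verifies (when $\operatorname{rank}W_{B}=N$) that this is equivalent to $\widetilde{W}\bigl(\begin{smallmatrix}0&I\\ I&0\end{smallmatrix}\bigr)\widetilde{W}^{*}\ge0$, i.e.\ the inequality in~(iii). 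Together with the fact from the previous paragraph that an accretive realisation with a rank-$N$ boundary condition is automatically maximal accretive, this yields $\text{(iii)}\Leftrightarrow\text{(i)}$ and shows that the rank condition is forced once $A$ is accretive.

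I expect the main obstacle to be the bookkeeping in the first paragraph: producing the single invertible matrix $R$ that tracks all boundary data through the rescalings, reflections and scalings composing $\mathcal{V}$, and checking that $\mathcal{V}$ restricts to bijections on the $H^{1}$- and $H_{0}^{1}$-levels so that \prettyref{thm:char_accretivity} can be invoked verbatim. A second, more routine, point is the linear-algebra lemma for~(iii): the equivalence of sign-definiteness of a quadratic form on $\ker W_{B}$ with the displayed matrix inequality needs care when the lower block of $\widetilde{W}$ is singular, which can be handled by the graph/signature argument above, by a density/perturbation argument, or by citing the corresponding result in the port-Hamiltonian literature.
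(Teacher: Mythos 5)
Your proposal is correct in substance and shares the paper's backbone: reduce to $\mathcal{H}=1$, $P_{0}=0$ (skew-selfadjoint perturbation), transport the operator to $\partial_{x}$ on $L^{2}(]-1/2,1/2[)^{N}$ via Theorem \ref{thm:main_congruence}, and invoke Theorem \ref{thm:char_accretivity}; your kernel-dimension count forcing $\operatorname{rank}W_{B}=N$ and $\ker(W_{B}R^{-1})=\ker\left(\begin{array}{cc}M&1\end{array}\right)$ is exactly the paper's argument with the maps $F_{1},F_{2}$. Two points genuinely differ. First, you build the boundary-transformation matrix $R$ by tracking boundary values through the explicit factors of $\mathcal{V}$ (the bookkeeping you flag as the main obstacle), whereas Lemma \ref{lem:transformation_bd_value} derives the matrix $C$ abstractly from the single identity $\mathcal{V}P_{1}\partial_{x}\mathcal{V}^{\ast}=\partial_{x}$ via integration by parts, which simultaneously yields the relation $C^{\ast}\operatorname{diag}(P_{1},-P_{1})C=\operatorname{diag}(1,-1)$ for free. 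One caveat here: the congruent operator is $D=\mathcal{V}A\mathcal{V}^{\ast}$, so its domain is $(\mathcal{V}^{\ast})^{-1}[\dom(A)]$, not $\mathcal{V}[\dom(A)]$; your $R$ must therefore be defined through $\mathcal{V}^{\ast}$ (as the paper's $C$ is), since accretivity is preserved under congruence but not under similarity --- a fixable slip, not a gap. Second, for (iii) you argue directly with the boundary form $\langle P_{1}v(b),v(b)\rangle-\langle P_{1}v(a),v(a)\rangle$ and the general fact that, for rank-$N$ matrices $\widetilde{W}$, nonpositivity of $\bigl(\begin{smallmatrix}0&I\\I&0\end{smallmatrix}\bigr)$ on $\ker\widetilde{W}$ is equivalent to $\widetilde{W}\bigl(\begin{smallmatrix}0&I\\I&0\end{smallmatrix}\bigr)\widetilde{W}^{\ast}\geq0$ --- the classical route of \cite[Theorem 7.2.4]{Jacob_Zwart_ISEM}, whose graph-of-a-contraction proof you would still have to supply or cite. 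The paper avoids this lemma entirely: it derives (iii) from the factorisation $W_{B}C=L\left(\begin{array}{cc}M&1\end{array}\right)$ together with the identity for $C$ above, reducing the displayed inequality to $L(1-MM^{\ast})L^{\ast}\geq0$, and conversely extracts $M=K_{2}^{-1}K_{1}$ from $K=W_{B}C$ for (iii)$\Rightarrow$(i). Your route is more direct but imports an external linear-algebra result; the paper's is self-contained, which is precisely the payoff of its congruence machinery.
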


We aim to prove this theorem with the help of our congruence result,
Theorem \ref{thm:main_congruence}, and the characteristaion result,
Theorem \ref{thm:char_accretivity}. For this, we need to inspect
how the transformation $\mathcal{V}$ of Theorem \ref{thm:main_congruence}
acts on the boundary values of a function $u\in H^{1}([a,b])^{N}.$
It is remarkable that no knowledge of how $\mathcal{V}$ acts on functions
with vanishing boundary values is needed in order to obtain
that $H_{0}^{1}([a,b])^{N}$-functions are mapped onto $H_{0}^{1}([-1/2,1/2])^{N}$-functions. 
\begin{lem}
\label{lem:transformation_bd_value} Let $P_{1}=P_{1}^{\ast}\in\R^{N\times N}$
invertible. Moreover, let $\mathcal{V}:L^{2}(]a,b[)^{N}\to L^{2}(]-1/2,1/2[)^{N}$
be invertible such that 
\[
\mathcal{V}P_{1}\partial_{x}\mathcal{V}^{\ast}=\partial_{x}.
\]
Then for $u\in H^{1}(]-1/2,1/2[)^{N}$ we have that $\mathcal{V}^{\ast}u\in H^{1}(]a,b[)^{N}$
and there exists an invertible matrix $C\in\R^{2N\times2N}$ such
that 
\[
\left(\begin{array}{c}
(\mathcal{V}^{\ast}u)(b)\\
(\mathcal{V^{\ast}}u)(a)
\end{array}\right)=C\left(\begin{array}{c}
u(1/2)\\
u(-1/2)
\end{array}\right)\quad(u\in H^{1}(]-1/2,1/2[)^{N}).
\]

Moreover, $C$ satisfies
\[
C^{\ast}\left(\begin{array}{cc}
P_{1} & 0\\
0 & -P_{1}
\end{array}\right)C=\left(\begin{array}{cc}
1 & 0\\
0 & -1
\end{array}\right).
\]
\end{lem}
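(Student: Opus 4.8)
The plan is to exploit the fact that $\mathcal{V}^\ast u \in H^1(]a,b[)^N$ for every $u \in H^1(]-1/2,1/2[)^N$, which follows because $\mathcal{V}P_1\partial_x\mathcal{V}^\ast = \partial_x$ forces $\mathcal{V}^\ast$ to map $\dom(\partial_x)$ into $\dom(P_1\partial_x) = \dom(\partial_x) = H^1(]a,b[)^N$. With this in hand, the \emph{trace map} $u \mapsto (u(1/2), u(-1/2)) \in \R^{2N}$ is surjective from $H^1(]-1/2,1/2[)^N$ (piecewise linear functions realise any boundary data), and similarly for $\mathcal{V}^\ast u$. The key structural input is \prettyref{prop:inner_product_d} (and its analogue on $]a,b[$), which identifies the boundary form of $\partial_x$ with a congruence-invariant bilinear form on trace data. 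Concretely, for $u,v \in H^1(]-1/2,1/2[)^N$,
\[
\langle \partial_x u, v\rangle + \langle u, \partial_x v\rangle = \scp{u(1/2)}{v(1/2)} - \scp{u(-1/2)}{v(-1/2)},
\]
while for $w,z \in H^1(]a,b[)^N$,
\[
\langle P_1\partial_x w, z\rangle + \langle w, P_1\partial_x z\rangle = \scp{P_1 w(b)}{z(b)} - \scp{P_1 w(a)}{z(a)}.
\]

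First I would establish the existence and linearity of $C$. Since $\mathcal{V}^\ast$ is a bounded linear operator into $L^2$ and its restriction to $H^1(]-1/2,1/2[)^N$ lands in $H^1(]a,b[)^N$, the closed graph theorem makes this restriction bounded $H^1 \to H^1$; composing with the continuous trace map $H^1(]a,b[)^N \to \R^{2N}$ yields a bounded linear map $T\colon H^1(]-1/2,1/2[)^N \to \R^{2N}$, $Tu = \bigl((\mathcal{V}^\ast u)(b), (\mathcal{V}^\ast u)(a)\bigr)$. This map factors through the trace map $\tau\colon u \mapsto (u(1/2), u(-1/2))$: if $\tau u = 0$ then $u \in H_0^1(]-1/2,1/2[)^N$, and by the remark following \prettyref{thm:main_congruence} (the congruence maps $H_0^1$ onto $H_0^1$; more precisely, since $\mathcal{V}P_1\mathring\partial_x\mathcal{V}^\ast = \mathring\partial_x$ would require checking, but one can argue directly: $u \in H_0^1$ means $u \in \dom(\mathring\partial_x) = \dom(\partial_x^\ast)$ up to sign, and $\mathcal{V}^\ast$ intertwines $\partial_x^\ast$ with $(P_1\partial_x)^\ast = P_1\partial_x^\ast$ by taking adjoints of the congruence), so $\mathcal{V}^\ast u \in H_0^1(]a,b[)^N$ and thus $Tu = 0$. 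Hence $T = C\tau$ for a unique $C \in \R^{2N\times 2N}$. Invertibility of $C$: run the same argument with $(\mathcal{V}^\ast)^{-1} = \mathcal{V}^{-\ast}$, which satisfies $\mathcal{V}^{-\ast} \partial_x \mathcal{V}^{-1} = P_1\partial_x$, i.e. $\mathcal{V}^{-1}(P_1\partial_x)\mathcal{V}^{-\ast} = \partial_x$ — wait, one should be careful with which operator gets the $P_1$; in any case the inverse map $H^1(]a,b[)^N \to H^1(]-1/2,1/2[)^N$ exists and yields the inverse trace-data map, so $C$ is invertible.

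The main substantive step is the identity $C^\ast \left(\begin{smallmatrix} P_1 & 0 \\ 0 & -P_1\end{smallmatrix}\right) C = \left(\begin{smallmatrix} 1 & 0 \\ 0 & -1\end{smallmatrix}\right)$. I would prove this by comparing the two boundary forms above. For $u, v \in H^1(]-1/2,1/2[)^N$, set $w = \mathcal{V}^\ast u$, $z = \mathcal{V}^\ast v$. Using $\mathcal{V}P_1\partial_x\mathcal{V}^\ast = \partial_x$, i.e. $P_1\partial_x\mathcal{V}^\ast = \mathcal{V}^\ast \partial_x$ (as $\mathcal{V}$ invertible), we get $P_1\partial_x w = \mathcal{V}^\ast\partial_x u$, hence
\[
\langle P_1\partial_x w, z\rangle + \langle w, P_1\partial_x z\rangle = \langle \mathcal{V}^\ast\partial_x u, \mathcal{V}^\ast v\rangle + \langle \mathcal{V}^\ast u, \mathcal{V}^\ast\partial_x v\rangle.
\]
Here one must be slightly careful: the inner product on the left is that of $L^2(]a,b[)^N$, and $\mathcal{V}^\ast$ is the adjoint with respect to \emph{that} inner product and the one on $L^2(]-1/2,1/2[)^N$, so $\langle \mathcal{V}^\ast f, \mathcal{V}^\ast g\rangle_{L^2(]a,b[)} = \langle f, \mathcal{V}\mathcal{V}^\ast g\rangle_{L^2(]-1/2,1/2[)}$. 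This is where I expect the main obstacle: the statement as written has $C^\ast$ with the ordinary transpose, not a $\mathcal{V}\mathcal{V}^\ast$-weighted adjoint, which suggests that implicitly $\mathcal{V}$ is such that $\langle P_1\partial_x w,z\rangle + \langle w,P_1\partial_x z\rangle$ equals $\langle\partial_x u,v\rangle + \langle u,\partial_x v\rangle$ directly — i.e. the $L^2$ boundary pairing is preserved verbatim. Granting that (it holds for the explicit $\mathcal{V}$ built in \prettyref{thm:main_congruence} from unitaries, reflections and the diagonal rescaling $\sqrt{D_2}^{-1}$, which is exactly designed to absorb $P_1$ into the identity), the right-hand side equals $\langle\partial_x u, v\rangle + \langle u, \partial_x v\rangle = \scp{u(1/2)}{v(1/2)} - \scp{u(-1/2)}{v(-1/2)}$ by \prettyref{prop:inner_product_d}. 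The left-hand side equals $\scp{P_1 w(b)}{z(b)} - \scp{P_1 w(a)}{z(a)} = \scp{P_1 (\mathcal{V}^\ast u)(b)}{(\mathcal{V}^\ast v)(b)} - \scp{P_1(\mathcal{V}^\ast u)(a)}{(\mathcal{V}^\ast v)(a)}$, which by definition of $C$ is $\left\langle \left(\begin{smallmatrix}P_1 & 0\\0 & -P_1\end{smallmatrix}\right)C\left(\begin{smallmatrix}u(1/2)\\u(-1/2)\end{smallmatrix}\right), C\left(\begin{smallmatrix}v(1/2)\\v(-1/2)\end{smallmatrix}\right)\right\rangle = \left\langle C^\ast\left(\begin{smallmatrix}P_1 & 0\\0 & -P_1\end{smallmatrix}\right)C\left(\begin{smallmatrix}u(1/2)\\u(-1/2)\end{smallmatrix}\right), \left(\begin{smallmatrix}v(1/2)\\v(-1/2)\end{smallmatrix}\right)\right\rangle$. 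Since the trace data $(u(1/2),u(-1/2))$ and $(v(1/2),v(-1/2))$ range over all of $\R^{2N}\times\R^{2N}$ as $u,v$ vary, comparing the two bilinear forms on $\R^{2N}$ gives $C^\ast\left(\begin{smallmatrix}P_1 & 0\\0 & -P_1\end{smallmatrix}\right)C = \left(\begin{smallmatrix}1 & 0\\0 & -1\end{smallmatrix}\right)$, as claimed; this also re-proves invertibility of $C$ since the right side is invertible. I would resolve the obstacle cleanly by either (a) replacing "$\mathcal{V}^\ast$" by the genuine Hilbert-space adjoint and carrying the weight $\mathcal{V}\mathcal{V}^\ast$ through — which would change the conclusion — or, preferably, (b) reading the hypothesis $\mathcal{V}P_1\partial_x\mathcal{V}^\ast = \partial_x$ together with the provenance from \prettyref{thm:main_congruence} as guaranteeing that $\mathcal{V}$ acts so that the plain $L^2$-pairings match, which is the reading consistent with the stated formula for $C$.
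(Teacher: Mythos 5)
Your overall strategy --- comparing the $P_1$-weighted boundary form on $]a,b[$ with the plain boundary form on $]-1/2,1/2[$ via the congruence, and reading off the matrix identity for $C$ from surjectivity of the trace maps --- is exactly the paper's. Your construction of $C$ (factoring the trace of $\mathcal{V}^{\ast}u$ through the trace of $u$ using that $\mathcal{V}^{\ast}$ preserves $H_{0}^{1}$, the latter obtained by taking adjoints in the congruence) is a legitimate alternative to the paper's direct argument, which instead tests the boundary pairing against explicit piecewise linear functions.

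The problem is that you leave the decisive step --- the identity
\[
\langle P_{1}\partial_{x}\mathcal{V}^{\ast}u,\mathcal{V}^{\ast}v\rangle+\langle\mathcal{V}^{\ast}u,P_{1}\partial_{x}\mathcal{V}^{\ast}v\rangle=\langle\partial_{x}u,v\rangle+\langle u,\partial_{x}v\rangle
\]
--- as an unresolved ``obstacle'', offering either to read it into the hypothesis as an extra assumption or to accept a weight $\mathcal{V}\mathcal{V}^{\ast}$ that ``would change the conclusion''. Neither is necessary: the identity follows at once from the hypothesis with the genuine Hilbert-space adjoint. In each summand exactly one slot of the inner product carries a free $\mathcal{V}^{\ast}$; moving it to the other slot produces a $\mathcal{V}$ that combines with $P_{1}\partial_{x}\mathcal{V}^{\ast}$ to give precisely $\partial_{x}$:
\[
\langle P_{1}\partial_{x}\mathcal{V}^{\ast}u,\mathcal{V}^{\ast}v\rangle_{L^{2}(]a,b[)^{N}}=\langle\mathcal{V}P_{1}\partial_{x}\mathcal{V}^{\ast}u,v\rangle_{L^{2}(]-1/2,1/2[)^{N}}=\langle\partial_{x}u,v\rangle,
\]
and likewise for the second summand; no $\mathcal{V}\mathcal{V}^{\ast}$ ever appears. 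Your difficulty originates in the incorrect intertwining relation $P_{1}\partial_{x}\mathcal{V}^{\ast}=\mathcal{V}^{\ast}\partial_{x}$: the congruence actually gives $P_{1}\partial_{x}\mathcal{V}^{\ast}=\mathcal{V}^{-1}\partial_{x}$ (the two coincide only for unitary $\mathcal{V}$), and it is this substitution that artificially places a $\mathcal{V}^{\ast}$ in both slots and conjures the spurious weight. With the adjoint moved correctly, the rest of your argument goes through and reproduces the paper's proof, including the final congruence identity for $C$ and, as you note, an independent proof of its invertibility.
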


\begin{proof}
From the congruence we see that $\mathcal{V}^{\ast}u\in H^{1}(]a,b[)^{N}$
if $u\in H^{1}(]-1/2,1/2[)^{N}.$ Moreover, for $u,w\in H^{1}(]-1/2,1/2[)^{N}$
we compute 
\begin{align}
\left\langle \left(\begin{array}{c}
(\mathcal{V}^{\ast}u)(b)\\
(\mathcal{V^{\ast}}u)(a)
\end{array}\right),\left(\begin{array}{c}
P_{1}(\mathcal{V}^{\ast}w)(b)\\
-P_{1}(\mathcal{V}^{\ast}w)(a)
\end{array}\right)\right\rangle  & =\langle(\mathcal{V}^{\ast}u)(b),P_{1}(\mathcal{V}^{\ast}w)(b)\rangle-\langle(\mathcal{V}^{\ast}u)(a),P_{1}(\mathcal{V}^{\ast}w)(a)\rangle\nonumber \\
 & =\langle\partial_{x}\mathcal{V}^{\ast}u,P_{1}\mathcal{V}^{\ast}w\rangle+\langle\mathcal{V}^{\ast}u,\partial_{x}P_{1}\mathcal{V}^{\ast}w\rangle\nonumber \\
 & =\langle\mathcal{V}P_{1}\partial_{x}\mathcal{V}^{\ast}u,w\rangle+\langle u,\mathcal{V}P_{1}\partial_{x}\mathcal{V}^{\ast}w\rangle\nonumber \\
 & =\langle\partial_{x}u,w\rangle+\langle u,\partial_{x}w\rangle\nonumber \\
 & =\left\langle \left(\begin{array}{c}
u(1/2)\\
u(-1/2)
\end{array}\right),\left(\begin{array}{c}
w(1/2)\\
-w(-1/2)
\end{array}\right)\right\rangle .\label{eq:bd_value}
\end{align}
We consider now the binary relation 
\[
C\coloneqq\left\{ \left(\left(\begin{array}{c}
u(1/2)\\
u(-1/2)
\end{array}\right),\left(\begin{array}{c}
(\mathcal{V}^{\ast}u)(b)\\
(\mathcal{V^{\ast}}u)(a)
\end{array}\right)\right)\,;\,u\in H^{1}(]-1/2,1/2[)^{N}\right\} \subseteq\R^{2N}\times\R^{2N}.
\]
Obviously, $C$ is linear. Moreover, $C$ is a mapping. Indeed, due
to the linearity, it suffices to prove that $\left(\begin{array}{c}
u(1/2)\\
u(-1/2)
\end{array}\right)=0$ implies $\left(\begin{array}{c}
(\mathcal{V}^{\ast}u)(b)\\
(\mathcal{V^{\ast}}u)(a)
\end{array}\right)=0$. So, let $u\in H^{1}(]-1/2,1/2[)^{N}$ with $\left(\begin{array}{c}
u(1/2)\\
u(-1/2)
\end{array}\right)=0$. By \eqref{eq:bd_value} it follows that 
\[
\left\langle \left(\begin{array}{c}
(\mathcal{V}^{\ast}u)(b)\\
(\mathcal{V^{\ast}}u)(a)
\end{array}\right),\left(\begin{array}{c}
P_{1}(\mathcal{V}^{\ast}w)(b)\\
-P_{1}(\mathcal{V}^{\ast}w)(a)
\end{array}\right)\right\rangle =0\quad(w\in H^{1}(]-1/2,1/2[)^{N}).
\]
Let now $x,y\in\R^{N}$ and define $f(t)\coloneqq\frac{1}{b-a}\left((t-a)P_{1}^{-1}x+(t-b)P_{1}^{-1}y\right)$
for $t\in[a,b].$ Then $f\in H^{1}(]a,b[)^{N}$ and we set $w\coloneqq\left(\mathcal{V}^{\ast}\right)^{-1}f\in H^{1}(]-1/2,1/2[)^{N}$
(this follows from $\mathcal{V}P_{1}\partial_{x}\mathcal{V}^{\ast}=\partial_{x}$).
Then clearly, $P_{1}(\mathcal{V}^{\ast}w)(b)=x$ and $-P_{1}(\mathcal{V}^{\ast}w)(a)=y$
and so, we infer 
\[
\left\langle \left(\begin{array}{c}
(\mathcal{V}^{\ast}u)(b)\\
(\mathcal{V^{\ast}}u)(a)
\end{array}\right),\left(\begin{array}{c}
x\\
y
\end{array}\right)\right\rangle =0.
\]
Since this holds for all $x,y\in\R^{N},$ we derive $\left(\begin{array}{c}
(\mathcal{V}^{\ast}u)(b)\\
(\mathcal{V^{\ast}}u)(a)
\end{array}\right)=0$. In the same way one shows that $C$ is one-to-one. Moreover, it
is obvious that the domain of $C$ is $\R^{2N}$ (see also Lemma \ref{lem:surjectivity_trace})
and so, $C$ can be represented by an invertible matrix, which we
again denote by $C\in\R^{2N\times2N}.$ Thus, we have 
\[
\left(\begin{array}{c}
(\mathcal{V}^{\ast}u)(b)\\
(\mathcal{V^{\ast}}u)(a)
\end{array}\right)=C\left(\begin{array}{c}
u(1/2)\\
u(-1/2)
\end{array}\right)\quad(u\in H^{1}(]-1/2,1/2[)^{N}).
\]
Plugging in this representation in (\ref{eq:bd_value}), we obtain
\[
\left\langle C\left(\begin{array}{c}
u(1/2)\\
u(-1/2)
\end{array}\right),\left(\begin{array}{cc}
P_{1} & 0\\
0 & -P_{1}
\end{array}\right)C\left(\begin{array}{c}
w(1/2)\\
w(-1/2)
\end{array}\right)\right\rangle =\left\langle \left(\begin{array}{c}
u(1/2)\\
u(-1/2)
\end{array}\right),\left(\begin{array}{cc}
1 & 0\\
0 & -1
\end{array}\right)\left(\begin{array}{c}
w(1/2)\\
w(-1/2)
\end{array}\right)\right\rangle 
\]
for all $u,w\in H^{1}(]-1/2,1/2[)^{N}$ form which we infer 
\[
C^{\ast}\left(\begin{array}{cc}
P_{1} & 0\\
0 & -P_{1}
\end{array}\right)C=\left(\begin{array}{cc}
1 & 0\\
0 & -1
\end{array}\right).\qedhere
\]
\end{proof}
Next, we present our alternative way of characterising maximal accretivity
of $A$ in terms of the boundary conditions. Our approach makes use
of the similarity result presented in the previous section.
\begin{proof}[Proof of Theorem \ref{thm:well_posedness}]
 First, we note that we can assume without loss of generality that
$P_{0}=0$ (note that $P_0\mathcal{H}$ is skew-selfadjoint on $L^2_\mathcal{H}(]a,b[)^N$ and thus, does not effect the maximal accretivity of $A$) and $\mathcal{H}=1$, by \prettyref{prop:Hamiltonian_congruence}. 

Assume that $A$ is maximal accretive. We need to show that there exists $W_B\in \R^{N\times 2N}$ with 
\[
 \dom(A)=\left\{x\in L^2(]a,b[)^N\,;\,  \mathcal{H}x\in H^1(]a,b[)^N,\, W_B \begin{pmatrix}
                                                                             (\mathcal{H}x)(b)\\
                                                                             (\mathcal{H}x)(a)
                                                                            \end{pmatrix}=0 \right\}.
                                                                            \]
By \prettyref{thm:main_congruence} we find $\mathcal{V}:L^2(]a,b[)^N\to L^2(]-1/2,1/2[)^N$ invertible with $\mathcal{V}P_1\partial_x\mathcal{V}^\ast=\partial_x$. Hence, $D\coloneqq \mathcal{V} A\mathcal{V}^\ast$ is maximal accretive and satisfies $\mathring{\partial}_x\subseteq D\subseteq \partial_x$ (note that $\mathcal{V}$ leaves $H_0^1(]a,b[)^N$ invariant by \prettyref{lem:transformation_bd_value}). By \prettyref{thm:char_accretivity} we find a matrix $M\in \R^{N\times N}$ such that
\[
 \dom(D)=\{ v\in H^1(]a,b[)^N\,;\, Mv(1/2-)+v(-1/2+)=0\}
\]
which in turn implies 
\begin{align*}
 \dom(A)&=\{ u\in L^2(]a,b[)^N\,;\, (\mathcal{V}^\ast)^{-1}u \in \dom(D)\}\\
        &=\{ u\in H^1(]a,b[)^N\,;\, M (\mathcal{V}^\ast)^{-1}u (1/2-)+(\mathcal{V}^\ast)^{-1}u (-1/2+)=0 \} \\
        &= \left\{ u\in H^1(]a,b[)^N\,;\, \begin{pmatrix}
                                           M & 1
                                           \end{pmatrix}\begin{pmatrix}
                                                         (V^\ast)^{-1}u(1/2)\\
                                                         (V^\ast)^{-1}u(-1/2)
                                                       \end{pmatrix}=0 \right\} \\
        &= \left\{ u\in H^1(]a,b[)^N\,;\, \begin{pmatrix}
                                           M & 1
                                           \end{pmatrix}C^{-1} \begin{pmatrix}
                                                         u(b)\\
                                                         u(a)
                                                       \end{pmatrix}=0  \right\},                                                
\end{align*}
where we have used \prettyref{lem:transformation_bd_value} in the last equality. This establishes the implication (i) $\Rightarrow$ (ii) with $W_B\coloneqq  \begin{pmatrix}
                                           M & 1
                                           \end{pmatrix}C^{-1} \in \R^{N\times 2N}$.

Assume that $A$ is accretive and that $\dom(A)$ is given as in (ii). Using Theorem \ref{thm:main_congruence}
we find $\mathcal{V}:L^{2}(]a,b[)^{N}\to L^{2}(]-1/2,1/2[)^{N}$ invertible,
such that 
\[
\mathcal{V}P_{1}\partial_{x}\mathcal{V}^{\ast}=\partial_{x}.
\]
We set $D\coloneqq\mathcal{V}A\mathcal{V}^{\ast}$ and obtain an accretive
operator on $L^{2}(]-1/2,1/2[)^{N}$ satisfying $\interior{\partial}_{x}\subseteq D\subseteq\partial_{x}.$
Moreover, by Lemma \ref{lem:transformation_bd_value} the domain of
$D$ is given by 
\[
\dom(D)=\left\{ u\in H^{1}(]-1/2,1/2[)^{N}\,;\,W_{B}C\left(\begin{array}{c}
u(1/2)\\
u(-1/2)
\end{array}\right)=0\right\} 
\]
with $C\in\R^{2N\times2N}$ as in Lemma \ref{lem:transformation_bd_value}.
Moreover, by Theorem \ref{thm:char_accretivity} (a) there exists
$M\in\R^{N}$ with $M^{\ast}M\leq1$ such that 
\[
\dom(D)\subseteq\left\{ u\in H^{1}(]-1/2,1/2[)^{N}\,;\,\left(\begin{array}{cc}
M & 1\end{array}\right)\left(\begin{array}{c}
u(1/2)\\
u(-1/2)
\end{array}\right)=0\right\} .
\]
Consider now the linear mappings 
\begin{align*}
F_{1}:\R^{N}\times\R^{N} & \to H^{1}(]-1/2,1/2[)^{N}\\
(x,y) & \mapsto\left(t\mapsto(1/2-t)y+(1/2+t)x\right)
\end{align*}
and 
\begin{align*}
F_{2}:H^{1}(]-1/2,1/2[)^{N} & \to\R^{N}\times\R^{N}\\
u & \mapsto(u(1/2),u(-1/2)).
\end{align*}
By definition $F_{1}[\ker W_{B}C]\subseteq\dom(D)$ and $F_{2}[\dom(D)]\subseteq\ker\left(\begin{array}{cc}
M & 1\end{array}\right)$ and thus, 
\[
\mathrm{id}=F_{2}\circ F_{1}:\ker W_{B}C\to\ker\left(\begin{array}{cc}
M & 1\end{array}\right)
\]
is a well-defined linear mapping. Hence, 
\[
N\leq\dim\ker W_{B}C\leq\dim\ker\left(\begin{array}{cc}
M & 1\end{array}\right)=N
\]
and so, $\ker W_{B}C=\ker\left(\begin{array}{cc}
M & 1\end{array}\right)$. Thus, there is $L\in\R^{N\times N}$ invertible such that 
\begin{equation}
W_{B}C=L\left(\begin{array}{cc}
M & 1\end{array}\right).\label{eq:WBCLM1}
\end{equation}
 In particular, $W_{B}$ has rank $N$. It is left to show that 
\begin{equation}
W_{B}\left(\begin{array}{cc}
-P_{1} & P_{1}\\
I_{N\times N} & I_{N\times N}
\end{array}\right)^{-1}\left(\begin{array}{cc}
0 & I_{N\times N}\\
I_{N\times N} & 0
\end{array}\right)\left(W_{B}\left(\begin{array}{cc}
-P_{1} & P_{1}\\
I_{N\times N} & I_{N\times N}
\end{array}\right)^{-1}\right)^{*}\geq0\label{eq:posdef}
\end{equation}
and an easy computation shows that \eqref{eq:posdef} is equivalent
to 
\[
W_{B}\left(\begin{array}{cc}
-P_{1}^{-1} & 0\\
0 & P_{1}^{-1}
\end{array}\right)W_{B}^{\ast}\geq0.
\]

Using Lemma \ref{lem:transformation_bd_value}, we infer that 
\[
\left(\begin{array}{cc}
-P_{1}^{-1} & 0\\
0 & P_{1}^{-1}
\end{array}\right)=C^\ast\left(\begin{array}{cc}
-1 & 0\\
0 & 1
\end{array}\right)C
\]
and so
\begin{align*}
W_{B}\left(\begin{array}{cc}
-P_{1}^{-1} & 0\\
0 & P_{1}^{-1}
\end{array}\right)W_{B}^{\ast} & =W_{B}C\left(\begin{array}{cc}
-1 & 0\\
0 & 1
\end{array}\right)C^{\ast}W_{B}^{\ast}\\
 & =L\left(\begin{array}{cc}
M & 1\end{array}\right)\left(\begin{array}{cc}
-1 & 0\\
0 & 1
\end{array}\right)\left(\begin{array}{c}
M^{\ast}\\
1
\end{array}\right)L^{\ast}\\
 & =L(-MM^{\ast}+1)L^{\ast}\geq0
\end{align*}
which shows (ii) $\Rightarrow$ (iii) as well as the formula $W_B C=L\begin{pmatrix} 
                                                                     M & 1
                                                                    \end{pmatrix}$. \\
To complete the proof, we have to show (iii) $\Rightarrow$ (i). We
again set $D\coloneqq\mathcal{V}A\mathcal{V}^{\ast}$ and prove that
$D$ is maximal accretive. We recall that
\[
\dom(D)=\left\{ u\in H^{1}(]-1/2,1/2[)^{N}\,;\,W_{B}C\left(\begin{array}{c}
u(1/2)\\
u(-1/2)
\end{array}\right)=0\right\} 
\]
with $C\in\R^{2N\times2N}$ from Lemma \ref{lem:transformation_bd_value}
and from the computation above, we see that 
\[
W_{B}C\left(\begin{array}{cc}
-1 & 0\\
0 & 1
\end{array}\right)C^{\ast}W_{B}^{\ast}\geq0.
\]
We set $K\coloneqq W_{B}C=\left(\begin{array}{cc}
K_{1} & K_{2}\end{array}\right)\in\R^{N\times2N}$, which has rank $N$ and satisfies $K_{1}K_{1}^{\ast}\leq K_{2}K_{2}^{\ast}$.
Since $K$ has rank $N$, the kernel of $\left(\begin{array}{c}
K_{1}\\
K_{2}
\end{array}\right)$ is trivial. Let now $x\in\ker K_{2}.$ From $K_{1}K_{1}^{\ast}\leq K_{2}K_{2}^{\ast}$
it follows that $x\in\ker K_{1}$ and hence, $x\in\ker\left(\begin{array}{c}
K_{1}\\
K_{2}
\end{array}\right)=\{0\}.$ Thus, $K_{2}$ is invertible and we set $M\coloneqq K_{2}^{-1}K_{1}.$
Then $MM^{\ast}=K_{2}^{-1}K_{1}K_{1}^{\ast}\left(K_{2}^{-1}\right)^{\ast}\leq1$
and hence, also $M^{\ast}M\leq1.$ Moreover, $\ker K=\ker\left(\begin{array}{cc}
M & 1\end{array}\right)$ and thus 
\[
\dom(D)=\left\{ u\in H^{1}(]-1/2,1/2[)^{N}\,;\,\left(\begin{array}{cc}
M & 1\end{array}\right)\left(\begin{array}{c}
u(1/2)\\
u(-1/2)
\end{array}\right)=0\right\} 
\]
and thus, the maximal accretivity of $D$, and hence of $A$, follows from Theorem \ref{thm:char_accretivity}
(c). 
\end{proof}

The formulation of the generator property as in Theorem \ref{thm:well_posedness}
is instrumental to understand the well-posedness theorem for \emph{boundary
control systems} in connection with port-Hamiltonian systems, for
which we will provide a different perspective below.
We recall that in the context of evolutionary equations, the well-posedness
of port-Hamiltonian boundary control systems has already been dealt
with in \cite[Section 5.1]{PTW16_IMA}. In any case, we need the following
notions. Let $W_{B,j}\in\mathbb{R}^{N_{j}\times2N}$ and $N_{j}\in\mathbb{N}$,
$j\in\{1,2\}$, with $N=N_{1}+N_{2},$ and $W_{C}\in\mathbb{R}^{K\times2N}$.
We define
\begin{align*}
\mathfrak{A}\colon\mathrm{dom}(\mathfrak{A})\subseteq L^{2}(]a,b[)^{N} & \to L^{2}(]a,b[)^{N}\\
x & \mapsto P_{1}\left(\mathcal{H}x\right)'+P_{0}\mathcal{H}x,\\
\mathrm{dom}(\mathfrak{A}) & =\left\{x\in L^{2}(]a,b[)^{N}\,;\,\mathcal{H}x\in H^{1}(]a,b[)^{N},W_{B,2}\left(\begin{array}{c}
\mathcal{H}x(b)\\
\mathcal{H}x(a)
\end{array}\right)=0\right\},
\end{align*}

as well as
\begin{align*}
B\colon\mathrm{dom}(\mathfrak{A})\to & \mathbb{R}^{N_{1}}\\
x\mapsto & W_{B,1}\left(\begin{array}{c}
\mathcal{H}x(b)\\
\mathcal{H}x(a)
\end{array}\right),\text{ and }\\
C\colon\mathrm{dom}(\mathfrak{A})\to & \mathbb{R}^{K}\\
x\mapsto & W_{C}\left(\begin{array}{c}
\mathcal{H}x(b)\\
\mathcal{H}x(a)
\end{array}\right).
\end{align*}

\begin{thm}[{\cite[Theorem 2.4]{Zwart2010} and \cite[Theorem 7.7]{Jacob_Zwart_2018}}]
\label{thm:bd_control}Assume that for all $\zeta\in]a,b[$ there
exists an invertible matrix $S(\zeta)$ and a diagonal matrix $\Delta(\zeta)$
such that
\[
P_{1}\mathcal{H}(\zeta)=S(\zeta)^{-1}\Delta(\zeta)S(\zeta),
\]
where$S$ and $\Delta$ are continuously differentiable.
Moreover, assume that $\rk\left(\begin{array}{c}
W_{B,1}\\
W_{B,2}
\end{array}\right)=N$, $\rk\left(\begin{array}{c}
W_{B,1}\\
W_{B,2}\\
W_{C}
\end{array}\right)=N+\rk( W_{C})$ and that $-A\coloneqq-\mathfrak{A}|_{\ker(B)}$ is a generator of
a $C_{0}$-semigroup on $L^{2}(]a,b[)^{N}$. Then for each $\tau>0$
and all $u\in C^{2}([0,\tau])^{N_{1}},$ $x_{0}\in\dom(\mathfrak{A})$
with $Bx_{0}=u(0)$ there exists a unique classical solution $x\in C^{1}([0,\tau])^{N}$
of 
\begin{align*}
\dot{x}(t) & =-\mathfrak{A}x(t)=-P_{1}\left(\mathcal{H}x\right)'-P_{0}\mathcal{H}x,\quad x(0)=x_0\\
u(t) & =Bx(t)=W_{B,1}\left(\begin{array}{c}
\mathcal{H}x(t,b)\\
\mathcal{H}x(t,a)
\end{array}\right),\\
y(t) & =Cx(t)=W_{C}\left(\begin{array}{c}
\mathcal{H}x(t,b)\\
\mathcal{H}x(t,a)
\end{array}\right).
\end{align*}
Moreover, there exists a constant $m_{\tau}\geq0$ (just depending
on $\tau$) such that 
\[
\|x(\tau)\|_{L^{2}(]a,b[)^{N}}^{2}+\int_{0}^{\tau}\|y(t)\|_{L^{2}(]a,b[)^{K}}^{2}dt\leq m_{\tau}\left(\|x_{0}\|_{L^{2}(]a,b[)^{N}}^{2}+\int_{0}^{\tau}\|u(t)\|_{L^{2}(]a,b[)^{N_{1}}}^{2}\right).
\]
\end{thm}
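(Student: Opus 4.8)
The plan is to transport the whole boundary control system, via the congruences of the previous section, to the reference transport system on $L^{2}(\,]-1/2,1/2[\,)^{N}$, where the hypotheses become transparent, to reduce the boundary input to a distributed forcing, and to read off existence, uniqueness and the energy estimates there. Concretely: setting $z\coloneqq\mathcal{H}x$ turns $\dot x=-\mathfrak{A}x$ into the evolutionary equation $(\partial_{t}\mathcal{H}^{-1}+P_{0}+P_{1}\partial_{x})z=0$ on the \emph{unweighted} space $L^{2}(\,]a,b[\,)^{N}$, with $B,C$ now reading off boundary values of $z$ (cf.\ \prettyref{prop:Hamiltonian_congruence}); choosing $\mathcal{V}\colon L^{2}(\,]a,b[\,)^{N}\to L^{2}(\,]-1/2,1/2[\,)^{N}$ boundedly invertible with $\mathcal{V}P_{1}\partial_{x}\mathcal{V}^{\ast}=\partial_{x}$ (\prettyref{thm:main_congruence}), multiplying on the left by $\mathcal{V}$ and substituting $z=\mathcal{V}^{\ast}w$ gives $(\partial_{t}\tilde{M}_{0}+\tilde{M}_{1}+\partial_{x})w=0$ with $\tilde{M}_{0}\coloneqq\mathcal{V}\mathcal{H}^{-1}\mathcal{V}^{\ast}$ bounded, selfadjoint, strictly positive and $\tilde{M}_{1}\coloneqq\mathcal{V}P_{0}\mathcal{V}^{\ast}$ bounded and skew-selfadjoint. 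By \prettyref{lem:transformation_bd_value} the boundary values of $z=\mathcal{V}^{\ast}w$ are $C$ times those of $w$ for an invertible $C\in\R^{2N\times2N}$, so $W_{B,1},W_{B,2},W_{C}$ are replaced by $W_{B,1}C,W_{B,2}C,W_{C}C$; all rank conditions persist, and the generator hypothesis on $-A$ persists because the solution semigroups of the two evolutionary equations are conjugate via the boundedly invertible $(\mathcal{V}^{\ast})^{-1}$. Since $\mathcal{V}^{\pm1}$ are bounded (hence norms equivalent), it suffices to prove the theorem for the reference system, with $m_{\tau}$ allowed to depend on $\mathcal{V}$; note that after this reduction the spatial part is literally $\partial_{x}$, so no smoothness of $\mathcal{H}$ is used henceforth.

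For the reference system, set $K\coloneqq\binom{W_{B,1}C}{W_{B,2}C}$, which has rank $N$. The generator property forces the $N\times N$ block $K_{2}$ of $K$ acting on the value at $-1/2$ to be invertible (read off from the explicit resolvent of $\partial_{x}$ at large real spectral parameter), so that $w(t,-\tfrac12)$ is determined by $w(t,\tfrac12)$ and $u(t)$, in the spirit of the normal-form considerations in the proof of \prettyref{thm:well_posedness}. Given $u\in C^{2}([0,\tau])^{N_{1}}$, choose $w_{u}(t,\cdot)$ affine in the spatial variable (hence in $H^{1}$) with boundary values satisfying $W_{B,1}C(\cdot)=u(t)$ and $W_{B,2}C(\cdot)=0$ — possible since $W_{B,1}C$ restricted to $\ker(W_{B,2}C)$ is onto $\R^{N_{1}}$ by the rank hypotheses. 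Then $\hat w\coloneqq w-w_{u}$ solves $(\partial_{t}\tilde{M}_{0}+\tilde{M}_{1}+\partial_{x})\hat w=g$ with $g\coloneqq-(\partial_{t}\tilde{M}_{0}+\tilde{M}_{1}+\partial_{x})w_{u}\in C^{1}([0,\tau];L^{2})$ (here $u\in C^{2}$ enters), and $\hat w(0)=w_{0}-w_{u}(0)$ lies in the domain of the generator — the compatibility $Bx_{0}=u(0)$ together with $x_{0}\in\dom(\mathfrak{A})$ is exactly what makes $K(\hat w(0)(1/2),\hat w(0)(-1/2))^{\top}=0$. As $\tilde{M}_{0}^{-1}$ is bounded, $\hat w$ is governed by a standard inhomogeneous Cauchy problem for that generator; its unique classical solution, after adding $w_{u}$ and transforming back with the inverse congruences (which preserve $C^{1}$-in-time regularity and the domains $\dom(\mathfrak{A})$, $\dom(A)$), yields the existence and uniqueness of $x\in C^{1}([0,\tau];L^{2}(\,]a,b[\,)^{N})$ with $x(t)\in\dom(\mathfrak{A})$, and the semigroup bound gives the estimate for $\|x(\tau)\|^{2}$ against $\|x_{0}\|^{2}+\int_{0}^{\tau}\|u(t)\|^{2}\,dt$.

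The main obstacle is the observation estimate $\int_{0}^{\tau}\|y(t)\|^{2}\,dt\le m_{\tau}(\|x_{0}\|^{2}+\int_{0}^{\tau}\|u(t)\|^{2}\,dt)$, i.e.\ $L^{2}$-admissibility of the output map, which the generator property alone does not provide and where the reduction to $\partial_{x}$ is essential. For the reference system I would run the energy balance: differentiating $\langle\tilde{M}_{0}w(t),w(t)\rangle$ in $t$ and invoking the $\partial_{x}$-analogue of \prettyref{prop:inner_product_d} expresses $\int_{0}^{\tau}(|w(t,\tfrac12)|^{2}-|w(t,-\tfrac12)|^{2})\,dt$ through $\|w(0)\|^{2},\|w(\tau)\|^{2}$ and bounded-perturbation terms coming from $\tilde{M}_{0},\tilde{M}_{1}$; inserting the boundary relation $w(t,-\tfrac12)=K_{2}^{-1}((u(t),0)^{\top}-K_{1}w(t,\tfrac12))$ controls first $\int_{0}^{\tau}|w(t,\tfrac12)|^{2}\,dt$ and then $\int_{0}^{\tau}|w(t,-\tfrac12)|^{2}\,dt$ by $\|x_{0}\|^{2}+\int_{0}^{\tau}\|u\|^{2}$, a Grönwall step absorbing the zero-order terms; since $y(t)=W_{C}C\,(w(t,\tfrac12),w(t,-\tfrac12))^{\top}$, this gives the claim. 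The subtlety is to carry out this trace estimate using only the \emph{bounded} perturbations $\tilde{M}_{0},\tilde{M}_{1}$, so that no regularity of $\mathcal{H}$ ever enters; a robust alternative, should the energy argument turn awkward, is to construct $w$ explicitly along the characteristics of $\partial_{t}w+\tilde{M}_{0}^{-1}\partial_{x}w=0$ and to close the zero-order terms by a contraction on $[0,\tau]$, which renders the boundary traces, and hence the output, manifestly $L^{2}$-bounded by the data.
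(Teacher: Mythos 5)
There is a genuine gap, and also a contextual point you should be aware of: the paper does not prove Theorem \ref{thm:bd_control} at all. It is quoted from \cite{Zwart2010} and \cite{Jacob_Zwart_2018} as the classical result to be contrasted with, and the paper's own contribution is the \emph{subsequent} theorem, which deliberately weakens the conclusion (unique solvability only in the extrapolation scale $H_{\rho}^{-k}$, with a loss of two orders of regularity for the observation) precisely so as to dispense with the $C^{1}$-diagonalisability of $P_{1}\mathcal{H}$. Your sketch claims to prove the classical statement \emph{without} that hypothesis (``no smoothness of $\mathcal{H}$ is used henceforth''), i.e.\ something strictly stronger than both the cited theorem and the paper's improvement; that alone should be a red flag.

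The gap sits exactly where you locate ``the main obstacle''. First, the congruence $\mathcal{V}$ straightens $P_{1}\partial_{x}$ but does not remove the roughness of $\mathcal{H}$: it reappears as $\tilde{M}_{0}=\mathcal{V}\mathcal{H}^{-1}\mathcal{V}^{\ast}$, a non-diagonal, merely $L^{\infty}$ matrix multiplication operator in front of $\partial_{t}$. Hence your fallback ``solve along the characteristics of $\partial_{t}w+\tilde{M}_{0}^{-1}\partial_{x}w=0$'' is unavailable: without a $C^{1}$ diagonalisation of $\tilde M_0^{-1}$ there are no well-defined characteristics, and this is precisely what the hypothesis $P_{1}\mathcal{H}=S^{-1}\Delta S$ with $S,\Delta\in C^{1}$ provides in \cite{Zwart2010}. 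Second, the energy balance does not yield the trace estimate: since $\tilde{M}_{1}$ is skew, differentiating $\langle\tilde{M}_{0}w,w\rangle$ gives only
\begin{equation*}
\int_{0}^{\tau}\left(\bigl|w(t,\tfrac12)\bigr|^{2}-\bigl|w(t,-\tfrac12)\bigr|^{2}\right)dt=\langle\tilde{M}_{0}w(0),w(0)\rangle-\langle\tilde{M}_{0}w(\tau),w(\tau)\rangle,
\end{equation*}
i.e.\ control of the \emph{difference} of the squared traces. Your absorption step inserts $w(t,-\tfrac12)=K_{2}^{-1}\bigl((u(t),0)^{\top}-K_{1}w(t,\tfrac12)\bigr)$ and needs a coefficient of the form $1-c\|K_{2}^{-1}K_{1}\|^{2}$ to be positive; for conservative boundary conditions ($M=K_{2}^{-1}K_{1}$ unitary, which Theorem \ref{thm:char_accretivity} explicitly allows) the difference vanishes identically for $u=0$ and the identity carries no information about $\int_{0}^{\tau}|w(t,\pm\tfrac12)|^{2}dt$, so the output admissibility does not follow. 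The Gr\"onwall step cannot repair this, because the quantity you would need to bound is not dominated by the quantity the identity controls. Third, even the state estimate is not obtained as you claim: the variation-of-constants bound for $\hat w=w-w_{u}$ involves $\|g(s)\|\lesssim|u(s)|+|\dot u(s)|$, so it produces a bound in terms of $\int_{0}^{\tau}\|\dot u\|^{2}$, whereas the theorem asserts a bound by $\int_{0}^{\tau}\|u\|^{2}$ only (admissibility of the control operator). Both admissibility statements are exactly the content of ``well-posedness'' here, and both are where the smooth diagonalisation is used in the cited proofs. The remaining parts of your reduction (the congruence, the transported boundary matrices $W_{B,j}C$, the lifting $w_{u}$ via the rank conditions, the classical solvability of the shifted inhomogeneous Cauchy problem for $u\in C^{2}$ and compatible $x_{0}$) are sound and standard, but they do not reach the two estimates that make the theorem nontrivial.
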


Owing to the flexibility of evolutionary equations, we are able to
significantly improve the well-posedness result in as much as we do
not need to impose any regularity conditions on  $\mathcal{H}$.
Further, we can address systems which are more general than the Cauchy problems of the previous theorem. In particular we consider
differential-algebraic equations. For this we  consider equations of the following form.

\begin{defn}
\label{def:DAE}Let $P_{1}=P_{1}^{*}\in\mathbb{R}^{N\times N}$ be
invertible and $\mathcal{H}\in L^{\infty}([a,b];\mathbb{R}^{N\times N})$.
Assume there exist $m,M\in\mathbb{R}_{>0}$ such that 
\[
mI_{N\times N}\leq\mathcal{H}(\zeta)=\mathcal{H}(\zeta)^{*}\leq MI_{N\times N}\quad(\text{a.e. }\zeta\in[a,b]).
\]
Let $M_{0}=M_{0}^{*},M_{1}\in L(L^{2}(]a,b[)^{N})$ such that $M_{0}\geq0$.
An equation of the form
\[
\left(\partial_{t}M_{0}+M_{1}+P_{1}\partial_{x}\right)\mathcal{H}X=F,
\]
where $F\colon\mathbb{R}\times]a,b[\to\mathbb{R}^{N}$ is given
and $X\colon\mathbb{R}\times]a,b[\to\mathbb{R}^{N}$ is the unknown,
is called a \emph{differential-algebraic port-Hamiltonian equation};
here $\partial_{t}$ is the derivative with respect to the $\mathbb{R}$-variable
(`time') and $\partial_{x}$ is the coordinate wise derivative with
respect to the spatial variable in $]a,b[$. 
\end{defn}

\begin{rem}
The classical port-Hamiltonian operator is then covered by choosing
$M_{1}=P_{0}$ and $M_{0}=\mathcal{H}^{-1}$.
\end{rem}

We provide the counterpart of the generation property first. We will
restrict ourselves to maximal accretive restrictions of $P_{1}\partial_{x}$
(corresponding to the case of a generator of a contraction semigroup)
and note that the additional generality enters the problem via the
operators $M_{0}$ and $M_{1}$. In order to formulate the well-posedness
result, we need some notations from the theory of evolutionary equations.
\begin{defn}
For a Hilbert space $H$ and $\rho>0$ we define 
\[
L_{\rho}^{2}(\R;H)\coloneqq\left\{ u:\R\to H\,;\,u\text{ measurable},\,\int_{\R}\|u(t)\|^{2}\e^{-2\rho t}\d t<\infty\right\} 
\]
equipped with the natural inner product. Moreover, we define the operator
$\partial_{t}$ on $L_{\rho}^{2}(\R;H)$ as the closure of 
\begin{align*}
C_{c}^{1}(\R;H)\subseteq L_{\rho}^{2}(\R;H) & \to L_{\rho}^{2}(\R;H)\\
\phi & \mapsto\phi',
\end{align*}
where $C_{c}^{1}(\R;H)$ denotes the space of continuously differentiable
functions with compact support on $\R$ taking values in $H$. 
\end{defn}

We remark that the so-defined operator $\partial_{t}$ is continuously
invertible on $L_{\rho}^{2}(\R;H)$ (see e.g.~\cite{Picard_McGhee2011,ISEM2020})
and thus, allows for the following definition.
\begin{defn}
For $k\in\N$ and $\rho>0$ we define the space $H_{\rho}^{-k}(\R;H)$
as the completion of $L_{\rho}^{2}(\R;H)$ with respect to the norm
\[
\|u\|_{\rho,-k}\coloneqq\|\partial_{t}^{-k}u\|_{L_{\rho}^{2}}.
\]
\end{defn}

\begin{rem}
It is easy to see that $\partial_{t}$ can be extended to a continuously
invertible operator on $H_{\rho}^{-k}(\R;H)$ and that each closed
densely defined operator between two Hilbert spaces $H_{0},H_{1}$
can be canonically extended to a closed and densely defined operator
between $H_{\rho}^{-k}(\R;H_{0})$ and $H_{\rho}^{-k}(\R;H_{1}).$ 
\end{rem}

We recall the well-posedness theorem for evolutionary equations in
the form needed here; see also \cite[Chapter 6]{ISEM2020}.
\begin{thm}[{\cite[Solution Theory]{Picard2009}, \cite[Theorem 6.2.5]{Picard_McGhee2011}}]
\label{thm:sol_theory_evo} Let $H$ be a Hilbert space, $M_{0},M_{1}\in L(H)$
with $M_{0}=M_{0}^{\ast}\geq0$ and assume there exists $\rho_{0}>0$
and $c>0$ such that 
\[
\rho_{0}\langle M_{0}x,x\rangle+\Re\langle M_{1}x,x\rangle\geq c\|x\|^{2}
\]
for all $x\in H$. Moreover, let $A:\dom(A)\subseteq H\to H$ be maximal
accretive. Then for each $\rho\geq\rho_{0}$ the operator 
\[
\left(\overline{\partial_{t}M_{0}+M_{1}+A}\right)
\]
is continuously invertible on $H_{\rho}^{-k}(\R;H)$ for each $k\in\mathbb{N}.$ 
\end{thm}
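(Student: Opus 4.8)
The plan is to regard $B \coloneqq \partial_{t}M_{0}+M_{1}+A$ on $L^{2}_{\rho}(\R;H)=H_{\rho}^{0}(\R;H)$ and to prove that both $B$ and its adjoint are \emph{strictly accretive}; the conclusion then follows from the abstract fact that a closed, densely defined operator $T$ with $\Re\langle Tu,u\rangle\geq c\|u\|^{2}$ and $\Re\langle T^{*}v,v\rangle\geq c\|v\|^{2}$ is continuously invertible with $\|T^{-1}\|\leq 1/c$. Throughout, $M_{0},M_{1},A$ are lifted to $L^{2}_{\rho}(\R;H)$ by acting pointwise in the time variable; they then commute with $\partial_{t}$, the operator $A$ remains maximal accretive (accretivity and maximality being inherited pointwise), and the hypotheses $M_{0}=M_{0}^{*}\geq 0$ and $\rho_{0}\langle M_{0}x,x\rangle+\Re\langle M_{1}x,x\rangle\geq c\|x\|^{2}$ persist on $L^{2}_{\rho}(\R;H)$.

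First I would record the two structural properties of $\partial_{t}$ on $L^{2}_{\rho}(\R;H)$: it is continuously invertible (as recalled above), and $\Re\langle\partial_{t}u,u\rangle=\rho\|u\|^{2}$ for $u\in\dom(\partial_{t})$, obtained by integrating by parts against the weight $\e^{-2\rho t}$, which also yields $\partial_{t}^{*}=2\rho-\partial_{t}$. Writing $M_{0}=(\sqrt{M_{0}})^{2}$ with $\sqrt{M_{0}}$ time-independent, hence commuting with $\partial_{t}$, this gives $\Re\langle\partial_{t}M_{0}u,u\rangle=\Re\langle\partial_{t}\sqrt{M_{0}}u,\sqrt{M_{0}}u\rangle=\rho\langle M_{0}u,u\rangle$. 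Combining with $\Re\langle Au,u\rangle\geq 0$ and using $\rho\geq\rho_{0}$ together with $M_{0}\geq 0$, I obtain for $u\in\dom(\partial_{t})\cap\dom(A)$
\[
\Re\langle Bu,u\rangle\geq\rho\langle M_{0}u,u\rangle+\Re\langle M_{1}u,u\rangle\geq\rho_{0}\langle M_{0}u,u\rangle+\Re\langle M_{1}u,u\rangle\geq c\|u\|^{2}.
\]
By Cauchy--Schwarz this yields $\|Bu\|\geq c\|u\|$, so $B$ is closable; I write $\overline{B}$ for its closure, on which the lower bound persists by continuity.

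Next I would treat the adjoint. Using $\partial_{t}^{*}=2\rho-\partial_{t}$, the commutation of $M_{0}$ with $\partial_{t}$, and $(\partial_{t}M_{0})^{*}=M_{0}(2\rho-\partial_{t})$, the same computation gives $\Re\langle M_{0}(2\rho-\partial_{t})v,v\rangle=2\rho\langle M_{0}v,v\rangle-\rho\langle M_{0}v,v\rangle=\rho\langle M_{0}v,v\rangle$; together with $\Re\langle A^{*}v,v\rangle\geq 0$ (valid because maximal accretivity of $A$ forces $A^{*}$ to be accretive) and $\Re\langle M_{1}^{*}v,v\rangle=\Re\langle M_{1}v,v\rangle$, this produces the twin estimate $\Re\langle\overline{B}^{*}v,v\rangle\geq c\|v\|^{2}$, hence $\|\overline{B}^{*}v\|\geq c\|v\|$. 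The first estimate makes $\overline{B}$ injective with closed range, the second forces $\ker\overline{B}^{*}=\{0\}$ and hence $\overline{B}$ to have dense range; therefore $\overline{B}$ is a bijection onto $L^{2}_{\rho}(\R;H)$ with $\|\overline{B}^{-1}\|\leq 1/c$. Finally, since $\overline{B}$ commutes with $\partial_{t}$, and hence with the isometric isomorphism $\partial_{t}^{-k}\colon H_{\rho}^{-k}(\R;H)\to L^{2}_{\rho}(\R;H)$, the inverse transports to each $H_{\rho}^{-k}(\R;H)$ with the same norm bound.

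The main obstacle I anticipate is the rigorous identification of $\overline{B}^{*}$ and the domain bookkeeping for the unbounded sum $\partial_{t}M_{0}+A$: one must exhibit a common core (for instance tensors of smooth compactly supported time-functions with elements of $\dom(A)$), verify that it is dense and a core for $\overline{B}$, and justify that the formal adjoint computation genuinely describes $\overline{B}^{*}$ rather than a proper restriction, so that the accretivity estimate for the adjoint is legitimate. Everything else reduces to the positivity bookkeeping sketched above.
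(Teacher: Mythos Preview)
The paper does not prove this theorem; it is quoted verbatim from \cite{Picard2009} and \cite[Theorem 6.2.5]{Picard_McGhee2011} and used as a black box. So there is no ``paper's own proof'' to compare against.

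Your sketch is the correct strategy and matches the spirit of the cited references: the two key ingredients are indeed the identity $\Re\langle\partial_t u,u\rangle_{L^2_\rho}=\rho\|u\|^2$ (whence $\Re\langle\partial_t M_0 u,u\rangle=\rho\langle M_0 u,u\rangle$ via the square root trick) and the accretivity of $A$ and $A^*$. The transport to $H_\rho^{-k}$ by conjugation with $\partial_t^{-k}$ is also the standard device.

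The gap you flag in your last paragraph is genuine and is precisely the technical heart of the matter. Knowing that the \emph{formal} adjoint $B^\diamond\coloneqq M_0(2\rho-\partial_t)+M_1^*+A^*$ is strictly accretive on $\dom(\partial_t)\cap\dom(A^*)$ only gives $B^\diamond\subseteq \overline{B}^{\,*}$; it does not by itself yield $\ker\overline{B}^{\,*}=\{0\}$, because an element of that kernel need not lie in $\dom(B^\diamond)$. In the cited works this is resolved in one of two ways. The original argument in \cite{Picard2009,Picard_McGhee2011} uses the unitary Fourier--Laplace transform $L^2_\rho(\R;H)\to L^2(\R;H)$, under which $\partial_t$ becomes multiplication by $(\mathrm{i}\xi+\rho)$; the problem then reduces to inverting $(\mathrm{i}\xi+\rho)M_0+M_1+A$ on $H$ for each $\xi$, which is a bounded strictly accretive perturbation of the maximal accretive $A$ and hence boundedly invertible with a $\xi$-uniform bound $1/c$. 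The alternative, used in later expositions such as \cite{ISEM2020}, is an approximation argument: one regularises in time by $(1+\varepsilon\partial_t)^{-1}$ (which is bounded, commutes with $M_0,M_1,A$, and maps into $\dom(\partial_t)$) to show that $\overline{B}$ is itself maximal accretive, after which strict accretivity gives $0$ in the resolvent. Either route closes the gap you identified; the remainder of your argument is sound.
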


Before we can state the well-posedness result for boundary control problems for differential-algebraic port-Hamiltonian equations, we need the following prerequisit. 

\begin{lem}
\label{lem:surjectivity_trace}Let $N\in\mathbb{N}$. Then there exists
$\eta\colon\mathbb{R}^{2N}\to H^{1}(]a,b[)^{N}$ continuous such that
\[
\gamma\left(\eta(v)\right)=v
\]
for all $v\in\mathbb{R}^{2N}$, where 
\[
\gamma\colon H^{1}(]a,b[)^{N}\ni(x_{k})_{k\in\{1,\ldots,N\}}\mapsto\left(\begin{array}{c}
\left(x_{k}(b-)\right)_{k\in\{1,\ldots,N\}}\\
\left(x_{k}(a+)\right)_{k\in\{1,\ldots,N\}}
\end{array}\right).
\]
\end{lem}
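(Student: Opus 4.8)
The plan is to exhibit $\eta$ explicitly as the component-wise affine interpolation of the prescribed boundary data. Writing $v=\binom{v_{b}}{v_{a}}$ with $v_{a},v_{b}\in\R^{N}$, I would set
\[
\eta(v)(t)\coloneqq\frac{t-a}{b-a}\,v_{b}+\frac{b-t}{b-a}\,v_{a}\qquad(t\in[a,b]),
\]
understood componentwise in $\R^{N}$; this is just the $]a,b[$-analogue of the map $F_{1}$ appearing in the proof of Theorem \ref{thm:well_posedness}. All that remains is to check the three required properties: that $\eta(v)\in H^{1}(]a,b[)^{N}$, that $\gamma(\eta(v))=v$, and that $\eta$ is continuous.

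The verification would proceed in three short steps. First, each component of $\eta(v)$ is an affine polynomial, hence lies in $C^{\infty}([a,b])$; since $]a,b[$ is bounded, both $\eta(v)$ and its constant derivative $t\mapsto(v_{b}-v_{a})/(b-a)$ belong to $L^{2}(]a,b[)^{N}$, so $\eta(v)\in H^{1}(]a,b[)^{N}$. Second, an affine function is continuous up to the endpoints, so the one-sided limits defining $\gamma$ are simply the endpoint values: $\eta(v)(b-)=v_{b}$ and $\eta(v)(a+)=v_{a}$, whence $\gamma(\eta(v))=\binom{v_{b}}{v_{a}}=v$. Third, $\eta$ is linear in $v$, and a direct computation yields $\|\eta(v)'\|_{L^{2}(]a,b[)^{N}}^{2}=|v_{b}-v_{a}|^{2}/(b-a)$ together with $\|\eta(v)\|_{L^{2}(]a,b[)^{N}}^{2}\le(b-a)\,(|v_{a}|+|v_{b}|)^{2}$, so that $\|\eta(v)\|_{H^{1}(]a,b[)^{N}}\le C_{a,b,N}\,|v|$ with a constant depending only on $a$, $b$ and $N$; in particular $\eta$ is continuous (indeed Lipschitz).

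I do not expect any genuine obstacle here: the construction is entirely elementary, and the only point worth stating carefully is that ``continuous'' refers to continuity with respect to the $H^{1}$-norm on the target. That this holds is automatic from the linearity of $\eta$ — a linear map out of the finite-dimensional space $\R^{2N}$ is bounded — with the explicit estimate above merely making the bound transparent. Should a nonlinear right inverse be preferred for any reason, any cutoff-based extension supported near the two endpoints would serve equally well, but the affine formula is the cleanest and is all that is needed in the sequel.
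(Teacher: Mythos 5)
Your proof is correct and uses exactly the same construction as the paper: the componentwise affine interpolation $\eta(v)(t)=\frac{1}{b-a}\bigl((t-a)v_{b}+(b-t)v_{a}\bigr)$, whose membership in $H^{1}$, trace property and $H^{1}$-continuity (via linearity and the explicit norm estimates) the paper leaves as ``easily checked.'' Your write-up simply fills in those routine verifications.
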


\begin{proof}
Let $v=(v_{1},v_{2})\in\R^{N}\times\R^{N}$. Then
\[
\eta(v)(t)\coloneqq\frac{1}{b-a}\left((t-a)v_{1}+(b-t)v_{2}\right)
\]
is a valid choice for $\eta$. The continuity properties are easily
checked.
\end{proof}

The next result puts Theorem \ref{thm:bd_control} into perspective
of evolutionary equations. Note that we do not assume any regularity
condition on $\mathcal{H}$. As our main assumption, we shall assume
the accretivity of the `derivative part' of the port-Hamiltonian. 
\begin{thm}
Consider a differential-algebraic port-Hamiltonian equation as in Definition
\ref{def:DAE}. Assume that there
exists $\rho_{0}\geq0$ such that for all $x\in L^{2}(]a,b[)^{N}$
\[
\rho_{0}\langle M_{0}x,x\rangle+\Re\langle M_{1}x,x\rangle\geq c\langle x,x\rangle
\]
for some $c>0$. Let $\gamma\colon H^{1}(]a,b[)^{N}\to\mathbb{R}^{2N}$
be given by 
\[
\gamma(x_{k})_{k\in\{1,\ldots,N\}}\mapsto\left(\begin{array}{c}
\left(x_{k}(b-)\right)_{k\in\{1,\ldots,N\}}\\
\left(x_{k}(a+)\right)_{k\in\{1,\ldots,N\}}
\end{array}\right).
\]
Let $W\in\mathbb{R}^{N\times2N}$ be such that 
\begin{align*}
A\colon\mathrm{dom}(A)\subseteq L^{2}(]a,b[)^{N} & \to L^{2}(]a,b[)^{N}\\
x & \mapsto P_{1}\partial_{x}x
\end{align*}
with 
\[
\mathrm{dom}(A)=\{x\in H^{1}(]a,b[)^{N}\,;\,W\gamma x=0\}
\]
is maximal accretive (cp.~Theorem \ref{thm:well_posedness}). Furthermore,
let $u\in H_{\rho}^{-k}(\mathbb{R};\mathbb{R}^{N})$ for some $k\in\mathbb{N}$
and $\rho\ge\rho_{0}$. Then there exists a unique $x\in H_{\rho}^{-k-1}(\mathbb{R};L_{\mathcal{H}}^{2}(]a,b[)^{N})$
such that 
\begin{align*}
\left(\partial_{t}M_{0}+M_{1}+P_{1}\partial_{x}\right)\mathcal{H}x & =0\\
W\gamma\mathcal{H}x & =u.
\end{align*}
Moreover, the mapping 
\[
H_{\rho}^{-k}(\mathbb{R};\mathbb{R}^{N})\ni u\mapsto\mathcal{H}x\in H_{\rho}^{-k-1}(\mathbb{R};L^{2}(]a,b[)^{N})\cap H_{\rho}^{-k-2}(\mathbb{R};H^{1}(]a,b[)^{N})
\]
is continuous. In particular, 
\[
u\mapsto y\coloneqq C\gamma\mathcal{H}x\in H_{\rho}^{-k-2}(\mathbb{R};\mathbb{R}^{K})
\]
is continuous for each $C\in\R^{K\times2N}$.
\end{thm}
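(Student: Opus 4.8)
The plan is to reduce, in three moves, to the solution theory for evolutionary equations (\prettyref{thm:sol_theory_evo}). First I would remove $\mathcal{H}$: since $\mathcal{H}$ is a bounded, selfadjoint, boundedly invertible multiplication operator (cf.\ \prettyref{prop:Hamiltonian_congruence}), the substitution $v\coloneqq\mathcal{H}x$ turns the system into finding $v\in H_\rho^{-k-1}(\mathbb{R};L^2(]a,b[)^N)$ with $\left(\partial_t M_0+M_1+P_1\partial_x\right)v=0$ and $W\gamma v=u$; then $x\coloneqq\mathcal{H}^{-1}v$ is the required solution, and $x$ lies in the asserted space if and only if $v\in H_\rho^{-k-1}(\mathbb{R};L^2(]a,b[)^N)$. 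The coercivity hypothesis on $M_0,M_1$ is posed on $L^2(]a,b[)^N$, which is exactly what the reduced problem needs, so from now on one may take $\mathcal{H}=1$.

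Second, I would homogenise the boundary condition. Since $A$ with domain $\{x\in H^1(]a,b[)^N\,;\,W\gamma x=0\}$ is maximal accretive, \prettyref{thm:well_posedness} together with surjectivity of $\gamma$ (\prettyref{lem:surjectivity_trace}) forces $\ker W$ to be $N$-dimensional, so $W$ has rank $N$ and admits a right inverse $W^{\dagger}\in\R^{2N\times N}$ with $WW^{\dagger}=I_{N\times N}$. Applying the (linear) lifting $\eta$ of \prettyref{lem:surjectivity_trace} in the spatial variable, pointwise in time, $g\coloneqq\eta(W^{\dagger}u)$ defines a bounded linear map $H_\rho^{-k}(\mathbb{R};\mathbb{R}^N)\to H_\rho^{-k}(\mathbb{R};H^1(]a,b[)^N)$ (a bounded fibre operator extends along the extrapolation scale) with $W\gamma g=WW^{\dagger}u=u$. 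Putting $z\coloneqq v-g$, one is left with
\[
\left(\partial_t M_0+M_1+P_1\partial_x\right)z=F_0\coloneqq-\left(\partial_t M_0+M_1+P_1\partial_x\right)g,\qquad W\gamma z=0,
\]
and, since $\partial_x\colon H^1(]a,b[)^N\to L^2(]a,b[)^N$ and $M_0,M_1$ are bounded, $F_0\in H_\rho^{-k-1}(\mathbb{R};L^2(]a,b[)^N)$ with $\|F_0\|_{\rho,-k-1}$ controlled by $\|u\|_{\rho,-k}$.

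Third, I would apply \prettyref{thm:sol_theory_evo}: its hypotheses ($M_0=M_0^{*}\geq0$, the $\rho_0$-coercivity, maximal accretivity of $A$) all hold, so $\overline{\partial_t M_0+M_1+A}$ is continuously invertible on $H_\rho^{-k-1}(\mathbb{R};L^2(]a,b[)^N)$ and yields a unique $z$ in its domain with $\left(\overline{\partial_t M_0+M_1+A}\right)z=F_0$. The point is that membership in that domain is exactly what encodes $W\gamma z=0$: unwinding the construction of the extrapolated operator, $\partial_t^{-(k+1)}z$ is almost everywhere $\dom(A)$-valued, whence $z\in H_\rho^{-(k+1)}(\mathbb{R};H^1(]a,b[)^N)$ and, because $\gamma$ acts in the fibre and commutes with $\partial_t^{-1}$, $W\gamma z=0$. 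Then $v\coloneqq z+g\in H_\rho^{-k-1}(\mathbb{R};L^2(]a,b[)^N)$ solves $\left(\partial_t M_0+M_1+P_1\partial_x\right)v=0$ with $W\gamma v=W\gamma z+W\gamma g=u$, and $x\coloneqq\mathcal{H}^{-1}v$ solves the original system. For the continuity statement, $u\mapsto g\mapsto F_0\mapsto z=\left(\overline{\partial_t M_0+M_1+A}\right)^{-1}F_0\mapsto v=\mathcal{H}x$ is a chain of bounded linear maps into $H_\rho^{-k-1}(\mathbb{R};L^2(]a,b[)^N)$; reading $P_1\partial_x v=-\partial_t M_0v-M_1v$ off the equation adds boundedness of $u\mapsto v$ into $H_\rho^{-k-2}(\mathbb{R};H^1(]a,b[)^N)$, and composing with the bounded trace $\gamma\colon H^1(]a,b[)^N\to\R^{2N}$ and the matrix $C$ gives continuity of $u\mapsto C\gamma\mathcal{H}x$ into $H_\rho^{-k-2}(\mathbb{R};\R^K)$. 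Uniqueness follows by comparing two solutions $x_1,x_2$: $e\coloneqq\mathcal{H}(x_1-x_2)$ solves the homogeneous reduced equation with $W\gamma e=0$, and reading $P_1\partial_x e=-\partial_t M_0e-M_1e\in H_\rho^{-k-2}(\mathbb{R};L^2(]a,b[)^N)$ off the equation places $e$ in the domain of $\overline{\partial_t M_0+M_1+A}$ at level $-(k+2)$, so the injectivity in \prettyref{thm:sol_theory_evo} gives $e=0$.

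The main obstacle I anticipate is exactly the bookkeeping identifying the PDE boundary condition $W\gamma\,\cdot=0$ with membership in the time-extrapolated domain $\dom(\overline{\partial_t M_0+M_1+A})$ — in both directions: that the abstract solution $z$ satisfies the boundary condition, and that a solution of the homogeneous equation with homogeneous boundary condition automatically lies in that domain (needed for uniqueness). This rests on the explicit description of the extrapolation spaces, on $\partial_x$ and $\gamma$ being bounded out of $H^1(]a,b[)^N$ and acting fibre-wise, and on carefully tracking the single power of $\partial_t^{-1}$ lost in passing from the datum $u$ to the state.
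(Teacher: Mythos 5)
Your proposal is correct and follows essentially the same route as the paper: lift the boundary datum $u$ into $H^1(]a,b[)^N$ via a right inverse of $W$ composed with the trace lifting $\eta$ of Lemma \ref{lem:surjectivity_trace}, subtract to reduce to a homogeneous problem for $\overline{\partial_{t}M_{0}+M_{1}+A}$, apply Theorem \ref{thm:sol_theory_evo}, and read the extra $H^{1}$-regularity (one more power of $\partial_t^{-1}$ down) off the equation. The only cosmetic difference is that the paper builds the right inverse of $W$ explicitly from the factorisation $WC=L\left(\begin{array}{cc}M & 1\end{array}\right)$ provided by Theorem \ref{thm:well_posedness}, whereas you take a generic one.
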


\begin{rem}
The continuity statements in the previous theorem are the proper replacements
for the inequality asserted to hold in Theorem \ref{thm:bd_control}.
We emphasise that the previous theorem also deals with differential-algebraic
equations as well as with rough $\mathcal{H}$. The price we have
to pay is the regularity loss of the solution $x$ and the observation
$y$. 
\end{rem}

\begin{proof}
We consider the equation 
\begin{align*}
\left(\partial_{t}M_{0}+M_{1}+P_{1}\partial_{x}\right)\mathcal{H}x & =0\\
W\gamma\mathcal{H}x & =u.
\end{align*}

Using that $A$ is m-accretive, we apply Theorem \ref{thm:well_posedness}
to find an invertible matrix $C\in\R^{2N\times2N}$ and two matrices
$L,M\in\R^{N\times N}$ such that $L$ is invertible and $M^{\ast}M\leq1$
with 
\[
WC=L\left(\begin{array}{cc}
M & 1\end{array}\right).
\]
Then 
\[
W\gamma\mathcal{H}x=u
\]
is equivalent to 
\begin{align*}
0 & =L\left(\begin{array}{cc}
M & 1\end{array}\right)C^{-1}\gamma\mathcal{H}x-u\\
 & =L\left(\begin{array}{cc}
M & 1\end{array}\right)\left(C^{-1}\gamma\mathcal{H}x-\left(\begin{array}{c}
0\\
L^{-1}u
\end{array}\right)\right)\\
 & =W\left(\gamma\mathcal{H}x-C\left(\begin{array}{c}
0\\
L^{-1}u
\end{array}\right)\right)
\end{align*}

Let $\eta$ be as in Lemma \ref{lem:surjectivity_trace}. 
Then the latter can equivalently be formulated by 
\begin{align*}
0 & =W\left(\gamma\mathcal{H}x-C\left(\begin{array}{c}
0\\
L^{-1}u
\end{array}\right)\right)\\
 & =W\left(\gamma\mathcal{H}x-\gamma\eta C\left(\begin{array}{c}
0\\
L^{-1}u
\end{array}\right)\right)\\
 & =W\gamma\left(\mathcal{H}x-\eta C\left(\begin{array}{c}
0\\
L^{-1}u
\end{array}\right)\right)
\end{align*}
which in turn is equivalent to 
\[
\mathcal{H}x-\tilde{u}\in\dom(A),
\]
where $\tilde{u}=\eta C\left(\begin{array}{c}
0\\
L^{-1}u
\end{array}\right)\in H_{\rho}^{-k}(\R;H^{1}(]a,b[)^{N})$. Thus, we obtain
\begin{align*}
\left(\partial_{t}M_{0}+M_{1}+P_{1}\partial_{x}\right)\mathcal{H}x & =0\\
W\gamma\mathcal{H}x & =u
\end{align*}
amounts to asking for 
\begin{align}
\overline{\left(\partial_{t}M_{0}+M_{1}+A\right)}(\mathcal{H}x-\tilde{u}) & =-\left(\partial_{t}M_{0}+M_{1}+P_{1}\partial_{x}\right)\tilde{u}\in H_{\rho}^{-k-1}(\R;L^{2}(]a,b[)^{N}).\label{eq:problem_aux}
\end{align}
Next, Theorem \ref{thm:sol_theory_evo} leads to unique existence
of $\mathcal{H}x-\tilde{u}\in H_{\rho}^{-k-1}(\R;L^{2}(]a,b[)^{N})$,
which shows the unique existence of $x\in H_{\rho}^{-k-1}(\R;L_{\mathcal{H}}^{2}(]a,b[)^{N})$
(the computation above shows existence and performing the steps backwards,
we obtain uniqueness) solving the problem. Moreover, since the mapping
\[
H_{\rho}^{-k}(\R;\R^{N})\ni u\mapsto\tilde{u}\in H_{\rho}^{-k}(\R;H^{1}(]a,b[)^{N})
\]
is continuous by Lemma \ref{lem:surjectivity_trace} and 
\[
H_{\rho}^{-k}(\R;H^{1}(]a,b[)^{N})\ni\tilde{u}\mapsto\left(\partial_{t}M_{0}+M_{1}+P_{1}\partial_{x}\right)\tilde{u}\in H_{\rho}^{-k-1}(\R;L^{2}(]a,b[)^{N})
\]
is easily seen to be continuous, Theorem \ref{thm:sol_theory_evo}
yields the continuity of 
\[
H_{\rho}^{-k}(\R;\R^{N})\ni u\mapsto\mathcal{H}x\in H_{\rho}^{-k-1}(\R;L^{2}(]a,b[)^{N}).
\]

Moreover, by (\ref{eq:problem_aux}) we have that 
\[
A(\mathcal{H}x-\tilde{u})=-\left(\partial_{t}M_{0}+M_{1}+P_{1}\partial_{x}\right)\tilde{u}-(\partial_{t}M_{0}+M_{1})(\mathcal{H}x-\tilde{u})\in H_{\rho}^{-k-2}(\R;L^{2}(]a,b[)^{N})
\]
which yields the continuity of the mapping 
\[
H_{\rho}^{-k}(\R;\R^{N})\ni u\mapsto\mathcal{H}x\in H_{\rho}^{-k-2}(\R;H^{1}(]a,b[)^{N})
\]
since $\tilde{u}\in H_{\rho}^{-k}(\R;H^{1}(]a,b[)^{N})\subseteq H_{\rho}^{-k-2}(\R;H^{1}(]a,b[)^{N})$. 
\end{proof}

\end{document}